\def\Adm{\mathcal{A}dm}
\def\cA{\mathcal{A}}
\def\cB{\mathcal{B}}
\def\bC{\mathbb{C}}
\def\cC{\mathcal{C}}
\def\cE{\mathcal{E}}
\def\cH{\mathcal{H}}
\def\bL{\mathbb{L}}
\def\cL{\mathcal{L}}
\def\cM{\mathcal{M}}
\def\cO{\mathcal{O}}
\def\cP{\mathcal{P}}
\def\cT{\mathcal{T}}
\def\cX{\mathcal{X}}
\def\barA{\overline{\cA}}
\def\barC{\overline{\cC}}
\def\barE{\overline{\cE}}
\def\barM{\overline{\cM}}
\def\wt{\widetilde}
\DeclareMathOperator{\alg}{alg}
\DeclareMathOperator{\ct}{ct}
\DeclareMathOperator{\Def}{Def}
\DeclareMathOperator{\ev}{ev}
\DeclareMathOperator{\Frac}{Frac}
\DeclareMathOperator{\id}{id}
\DeclareMathOperator{\Pic}{Pic}
\DeclareMathOperator{\pr}{pr}
\DeclareMathOperator{\red}{red}
\DeclareMathOperator{\Spec}{Spec}
\DeclareMathOperator{\trunc}{trunc}
\DeclareMathOperator{\vir}{vir}
\newcommand{\Hom}{\mathrm{Hom}}
\newcommand{\NL}{{\mathsf{NL}}}
\newcommand{\wNL}{{\widetilde{\mathsf{NL}}}}
\newcommand{\Tor}{{\mathsf{Tor}}}
\newtheorem{thm}{Theorem}[section]
\newtheorem{lem}[thm]{Lemma}
\newtheorem{cor}[thm]{Corollary}
\newtheorem{prop}[thm]{Proposition}
\theoremstyle{definition}
\newtheorem{rem}[thm]{Remark}
\newtheorem{defn}[thm]{Definition}
\g@addto@macro\bfseries{\boldmath} 
\begin{document}

\title{$d$-elliptic loci and the Torelli map}

\author{Fran\c cois Greer}
\address{Michigan State University, Department of Mathematics, 619 Red Cedar Rd
\hfill \newline\texttt{}
 \indent East Lansing, MI 48824}
 \email{{\tt greerfra@msu.edu}}

\author{Carl Lian}

\address{Tufts University, Department of Mathematics, 177 College Ave
\hfill \newline\texttt{}
 \indent Medford, MA 02155} 
 \email{{\tt Carl.Lian@tufts.edu}}
 
\date{\today}

\begin{abstract}
We show that two natural cycle classes on the moduli space of compact type stable maps to a varying elliptic curve agree. The first is the virtual fundamental class from Gromov-Witten theory, and the second is the Torelli pullback of the special cycle on $\cA_g$ of principally polarized abelian varieties admitting an elliptic isogeny factor.
\end{abstract}

\maketitle


\section{Introduction}

Let $\cM_g$ be the moduli space of curves. This note is concerned with the loci of curves $C$ admitting a cover $f:C\to E$, where $E$ is an elliptic curve and $f$ has degree $d$. The Hurwitz space $\cH_{g/1,d}$ of such covers can be compactified on the one hand by the space $\Adm_{g/1,d}$ of admissible covers \cite{hm,acv}, or alternatively by a relative moduli space of log stable maps $\barM_{g,1}(\overline{\cE},d)$ to the fibers of the universal stable elliptic curve.

The resulting classes $[\Adm_{g/1,d}]\in A_{2g-2}(\barM_g)$ are particularly interesting: they give cycle-valued quasi-modular forms in low genus \cite{lian_d-ell}, but on the other hand are non-tautological in general \cite{gp,vz,lian_nontaut}, so it is difficult to write down formulas for their classes. For covers of a \emph{fixed} elliptic curve, quasi-modularity phenomena have been established in all genera \cite{dijkgraaf,eo,okpan,obpix}.

A basic observation is that if $f:C\to E$ is a $d$-elliptic cover of smooth curves, then the induced map $h:E\to J(C)$ on Jacobians has the property that the pullback of the principal polarization on $J(C)$ has degree $d$ on $E$. In this case, $J(C)$ is isogenous to the product of $E$ and a polarized abelian variety $Y$ of dimension $g-1$. Therefore, it is natural to consider the ``Noether-Lefschetz locus'' $\wNL_{g,d}\to\cA_g$ of principally polarized abelian varieties along with a map of degree $d$ from an elliptic curve, and its pullback under the Torelli map $\Tor:\cM_{g}^{\ct}\to \cA_g$.

It is somewhat more convenient to pull back the Noether-Lefschetz loci by the \emph{pointed} Torelli map $\Tor_1:\cM_{g,1}^{\ct}\to \cA_g$, although we discuss the unpointed case in Remark \ref{rem_unpointed}. 

\begin{thm}\label{thm_pullback_to_mg1}
We have a Cartesian diagram
\begin{equation*}
\xymatrix{
\cM^{\ct,q}_{g,1}(\cE,d) \ar[r]^(0.6){\wt{\Tor}_1} \ar[d]_{\epsilon} & \wNL_{g,d} \ar[d]^{\epsilon} \\
\cM_{g,1}^{\ct}  \ar[r]^(0.54){\Tor_1} & \cA_g
}
\end{equation*}
where $\cM^{\ct,q}_{g,1}(\cE,d)$ is the moduli space of pointed stable maps $f:(C,p)\to (E,q)$ to a (varying) smooth elliptic curve $(E,q)$, for which $(C,p)$ is of compact type and $f(p)=q$.
\end{thm}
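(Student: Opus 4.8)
The plan is to exhibit a functorial inverse to the natural morphism $\Phi\colon \cM^{\ct,q}_{g,1}(\cE,d)\to \cM_{g,1}^{\ct}\times_{\cA_g}\wNL_{g,d}$, thereby identifying the two moduli problems and proving the square Cartesian. First I would make $\wt{\Tor}_1$ explicit: to a stable map $f\colon(C,p)\to(E,q)$ I associate the homomorphism $h:=f^{*}\colon E\to J(C)$ given by pullback of degree-zero line bundles, using $E=\Pic^{0}(E)$ via $e\mapsto \cO_E(e-q)$ and $J(C)=\Pic^{0}(C)$. The degree computation is the classical adjunction between pullback and norm: under the canonical self-duality $\lambda_E$ of $E$ and the theta polarization $\theta$ on $J(C)$, the dual of $f^{*}$ is the norm map $f_{*}$, so that $\widehat{f^{*}}\circ\theta=\lambda_E\circ f_{*}$, whence the pullback polarization is $(f^{*})^{*}\theta=\widehat{f^{*}}\circ\theta\circ f^{*}=\lambda_E\circ f_{*}\circ f^{*}=d\,\lambda_E$. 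Thus $h$ has degree $d$ in the sense of the introduction, $\wt{\Tor}_1$ lands in $\wNL_{g,d}$, and the square commutes since both routes send $f$ to $(J(C),\theta)$.

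The heart of the argument is the inverse, which recovers the cover from the abelian-variety data. Given a point of the fiber product, namely a compact type pointed curve $(C,p)$ together with a degree-$d$ homomorphism $h\colon E\to J(C)$ whose target is identified with $\Tor_1(C,p)$, I would form the adjoint norm homomorphism $\mathrm{Nm}_h:=\lambda_E^{-1}\circ\widehat{h}\circ\theta\colon J(C)\to E$ and set $f:=\mathrm{Nm}_h\circ \mathrm{aj}_p\colon C\to E$, where $\mathrm{aj}_p(x)=\cO_C(x-p)$ is the Abel--Jacobi map. Then $f(p)=\mathrm{Nm}_h(0)=q$. To see that $f\mapsto f^{*}$ and $h\mapsto f$ are mutually inverse I would use that pushforward commutes with Abel--Jacobi, $f_{*}\circ \mathrm{aj}_p=\mathrm{aj}_q\circ f$, together with the fact that $\mathrm{aj}_q$ is the canonical isomorphism $E\cong J(E)$. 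This yields $\mathrm{Nm}_{f^{*}}\circ \mathrm{aj}_p=f$ on one side; on the other, since the image of $\mathrm{aj}_p$ generates $J(C)$ (each component of the compact type curve $C$ generating its Jacobian factor), the homomorphisms $f_{*}$ and $\mathrm{Nm}_h$ agree, so that $f^{*}=h$, pullback being adjoint to norm and the double adjoint being the identity. A short check that $h$ of degree $d$ forces $f$ to be nonconstant of degree $d$, stable, and with source exactly $(C,p)$, completes the bijection on points.

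To upgrade this to an isomorphism of moduli stacks, which is what the Cartesian claim requires, I would run the entire construction in families over an arbitrary base $S$. Over the compact type locus the relative Picard $\Pic^{0}_{\cC/S}$ is an abelian scheme carrying a principal polarization, so the relative Abel--Jacobi morphism $\cC\to J(\cC/S)$ rigidified by the section $\sigma$, and the relative norm $J(\cC/S)\to\cE$ adjoint to a family $H\colon\cE\to J(\cC/S)$, are both defined, and the assignments $f\mapsto H=f^{*}$ and $(\,(\cC,\sigma),H\,)\mapsto f=\mathrm{Nm}_H\circ\mathrm{aj}_\sigma$ are manifestly compatible with base change. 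Being fiberwise mutually inverse, they define mutually inverse morphisms of stacks, which proves the square Cartesian.

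I expect the main obstacle to be the behavior over the boundary: one must ensure that the reconstructed morphism $f=\mathrm{Nm}_H\circ\mathrm{aj}_\sigma$ is a \emph{stable} map of the prescribed degree in families of nodal compact type curves, so that the components on which $f$ becomes constant are exactly accounted for, no stabilization is needed, and the source remains literally the given $(\cC,\sigma)$; one must also confirm that the degree matches on the nose rather than merely up to the polarization degree. The smooth-fiber bijection is clean, and the genuine technical work is checking that the relative Abel--Jacobi and norm constructions, together with the stability and degree bookkeeping, extend uniformly across the compact type boundary.
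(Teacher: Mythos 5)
Your proposal is correct and follows essentially the same route as the paper: define $\wt{\Tor}_1$ by $f\mapsto f^{*}$ (with the degree check reducing to $f_{*}\circ f^{*}=[d]$), and construct the inverse on the fiber product by composing the relative Abel--Jacobi map with the (dual of the) universal $d$-elliptic homomorphism, working in families over a test base. The only point worth noting is that your flagged ``main obstacle'' is lighter than you fear: since $(C,p)$ is already a stable pointed curve and no rational component can map nontrivially to an elliptic curve, stability of the reconstructed map is automatic, and the degree identification follows from the same adjunction $f_{*}\circ f^{*}=[d]$ you already use.
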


The statement of Theorem \ref{thm_pullback_to_mg1} has also appeared previously in the notes \cite{pand_notes}.

Unlike the space of admissible covers, the space $\cM^{\ct,q}_{g,1}(\cE,d)$ fails to have the expected dimension of $2g-1$ in general. For example, if $C=C_{g-1}\cup E'$ is the stable curve obtained by gluing a curve $C_{g-1}\ni p$ of genus $g-1$ and an elliptic tail $E'$ at a node (away from $p$), then there exist stable maps $f:(C,p)\to (E,q)$ where $f$ contracts the entirety of $C_{g-1}$ to $q$, and restricts to an isogeny of degree $d$ on $E'$. However, $C$ moves in a family of dimension $3g-3$.

On the other hand, the space $\cM^{\ct,q}_{g,1}(\cE,d)$ carries a virtual fundamental class obtained by Gysin pullback in the Cartesian diagram
\begin{equation*}
    \xymatrix{
    \cM_{g,1}^{\ct,q}(\cE,d) \ar[r]^{i} \ar[d] & \cM_{g,1}^{\ct}(\cE,d) \ar[d]^{\ev} \\
    \cM_{1,1} \ar[r]^{q} & \cE 
    }
\end{equation*}
where $q:\cM_{1,1}\to\cE$ denotes the tautological section and $\cM^{\ct}_{g,1}(\cE,d)$ denotes the space of pointed stable maps which are no longer required to satisfy the property $f(p)=q$. The virtual class $[\cM^{\ct}_{g,1}(\cE,d)]^{\vir}$ agrees with the usual virtual class in Gromov-Witten theory upon restriction to each fixed elliptic target. We show that the Gysin pullback of the fundamental class of $\wNL_{g,d}$ by the Torelli map agrees precisely with the virtual class on $\cM^{\ct,q}_{g,1}(\cE,d)$.

\begin{thm}\label{main_thm}
We have $\Tor_1^{!}([\wNL_{g,d}])=q^{!}([\cM^{\ct}_{g,1}(\cE,d)]^{\vir})$ in $A_{2g-1}(\cM^{\ct}_{g,1}(\cE,d))$.
\end{thm}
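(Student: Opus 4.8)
The plan is to deduce Theorem \ref{main_thm} from the Cartesian diagram of Theorem \ref{thm_pullback_to_mg1} together with compatibility of Gysin pullbacks along a common regular embedding. The key point is that both sides of the asserted identity are refined Gysin pullbacks associated to the \emph{same} morphism $q:\cM_{1,1}\to\cE$, which is a regular embedding (a section of the universal elliptic curve, hence a divisorial regular embedding of codimension $1$). First I would observe that Theorem \ref{thm_pullback_to_mg1} exhibits $\cM^{\ct,q}_{g,1}(\cE,d)$ as a fiber product in two distinct ways: as the pullback of $\wNL_{g,d}$ along $\Tor_1$, and as the pullback of $\cM^{\ct}_{g,1}(\cE,d)$ along $q$. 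The strategy is to show that the two classes in question are both computed by pulling back along $q$, once one knows how $[\wNL_{g,d}]$ relates to the virtual class upstairs.

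**Reduction to a statement over $\cA_g$.**
The essential input is an analogous identity one level up, on the moduli of abelian varieties. Write $\wNL_{g,d}\to\cA_g$ via its own structure map, and note that $\wNL_{g,d}$ itself should be understood as cut out inside a space of maps from elliptic curves to (the universal family over) $\cA_g$; the locus $\wNL_{g,d}$ is precisely where such a map factors through the fixed point $q$. I would then argue that $[\wNL_{g,d}]$ is \emph{itself} a Gysin pullback by $q$ of the fundamental class of a larger Noether–Lefschetz-type space $\NL$ of degree-$d$ maps from a pointed elliptic curve into a ppav, with no constraint on the image of the marked point. Because the Torelli map $\Tor_1$ is the restriction of an lci morphism and $\wNL_{g,d}$ has the expected codimension inside this larger space (the abelian-variety side being unobstructed, so that the NL locus is of pure expected dimension), the excess intersection occurring in $\cM^{\ct}_{g,1}(\cE,d)$ is entirely accounted for by the obstruction bundle of the stable maps, which is the \emph{same} bundle governing the virtual class.

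**The commutation of Gysin maps.**
Concretely, I would apply the compatibility of refined Gysin homomorphisms with the composition and commutativity in a square of Cartesian diagrams: given the two compatible squares (the one defining $\wNL_{g,d}\to\cA_g$ via $q$, and the one defining $\cM^{\ct,q}_{g,1}(\cE,d)$ via $q$), the identity
\[
\Tor_1^{!}\,q^{!}[\NL] \;=\; q^{!}\,\Tor_1^{!}[\NL]
\]
holds by \cite[Thm.~6.4]{fulton}-style commutativity of Gysin pullbacks for regular embeddings in independent directions. The left-hand side equals $\Tor_1^{!}([\wNL_{g,d}])$ once $[\wNL_{g,d}]=q^{!}[\NL]$ is established, while the right-hand side equals $q^{!}([\cM^{\ct}_{g,1}(\cE,d)]^{\vir})$ provided $\Tor_1^{!}[\NL]=[\cM^{\ct}_{g,1}(\cE,d)]^{\vir}$. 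This last equality is the crux: it asserts that pulling back the (honest, expected-dimensional) NL class under Torelli reproduces the virtual class of stable maps.

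**The main obstacle.**
The hard part will be precisely identifying $\Tor_1^{!}[\NL]$ with $[\cM^{\ct}_{g,1}(\cE,d)]^{\vir}$, i.e.\ matching the excess/obstruction data on the two sides. On the abelian side, a degree-$d$ map from an elliptic curve to a ppav of dimension $g$ is governed by a deformation theory whose obstruction space is $H^1$ of the pullback of the tangent/cotangent complex; on the curve side, the GW obstruction for maps to a genus-$1$ target lives in $H^1(C,f^*T_E)=H^1(C,\cO_C)$, which has rank $g$ on a compact-type curve. I would need to show that the Torelli map, being étale onto its image in the relevant range and compatible with the Abel–Jacobi identification of maps $C\to E$ with maps $\Alb(C)=J(C)\to E$, induces an isomorphism of the relevant obstruction theories — equivalently, that the normal cone of $\wNL_{g,d}$ in its ambient NL space pulls back under $\Tor_1$ to the intrinsic normal cone governing $[\cM^{\ct}_{g,1}(\cE,d)]^{\vir}$. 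Verifying this comparison of perfect obstruction theories — and in particular that the failure of $\cM^{\ct,q}_{g,1}(\cE,d)$ to have expected dimension (as in the elliptic-tail example) is reflected identically on both sides — is where the real work lies; the Gysin-commutation formalism is then formal once the obstruction theories are matched.
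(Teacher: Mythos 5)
Your skeleton is essentially the one the paper uses --- the Cartesian diagram of Theorem \ref{thm_pullback_to_mg1} plus an identification of obstruction theories --- with one cosmetic reorganization: you propose to ``un-$q$'' the Noether--Lefschetz class (writing $[\wNL_{g,d}]=q^![\NL]$ for an enlarged space, in effect $\NL=\wNL_{g,d}\times_{\cA_1}\cE$) and then commute $q^!$ past $\Tor_1^!$, whereas the paper instead introduces a truncated virtual class $[\cM^{\ct,q}_{g,1}(\cE,d)]^{\trunc}$ on the $q$-constrained side and proves separately that it equals $(\Tor_1,\id)^!([\wNL_{g,d}])$ (Proposition \ref{gysin_equals_trunc}) and $q^!([\cM^{\ct}_{g,1}(\cE,d)]^{\vir})$ (Proposition \ref{trunc_gw_comparison}). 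Either bookkeeping is fine, and the commutativity of the two Gysin maps is standard. The problem is that your proposal defers exactly the two steps that constitute the content of the proof. First, the assertion that ``the abelian-variety side is unobstructed'' so that the excess is ``entirely accounted for by the obstruction bundle'' is precisely Propositions \ref{Def(h)} and \ref{NL_normal_bundles}: one must prove that $(\epsilon,\mu):\wNL_{g,d}\to\cA_g\times\cA_1$ is unramified and compute its normal bundle to be $R^1(\pi_{\NL,g})_{*}h_{\NL}^{*}T_{\cX_1/\cA_1}$. This is a genuine deformation-theoretic computation (the paper decomposes $X$ up to isogeny as $Y\times E$, analyzes cup products with the polarization on K\"{u}nneth components, and uses the Poincar\'{e} bundle to show that isogenies deform with their targets); without it you do not even know that $\Tor_1^!([\wNL_{g,d}])$ is governed by a rank-$g$ bundle, let alone the right one.

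Second, the crux you correctly flag --- matching the obstruction theories under Abel--Jacobi --- is where the work lies, but your formulation of it is off in two respects. The relevant comparison is not between ``the normal cone of $\wNL_{g,d}$ in its ambient NL space'' (with the enlargement above, that normal cone is just the one-dimensional $\cE$-direction) and the intrinsic normal cone of the stable-map space; it is between the conormal bundle of $\wNL_{g,d}$ in $\cA_g\times\cA_1$ and the truncation $(R^1\pi_{*}f^{*}T_{\cE/\cM_{1,1}})^{\vee}$ of the Gromov--Witten obstruction theory, and one must separately reconcile the $H^0(C,f^{*}T_E)$ (translation) summand of the full obstruction theory with the extra $\cE$-factor --- this is what Proposition \ref{trunc_gw_comparison} does via Behrend--Fantechi functoriality, and it is not automatic. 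Also, your appeal to $\Tor_1$ being ``\'{e}tale onto its image'' is both false (it is ramified along the hyperelliptic locus and has positive-dimensional fibers on the compact-type boundary) and unnecessary: $\Tor_1^!$ is defined because $\Tor_1$ is a morphism of smooth stacks, and the identification of obstruction theories comes from the universal Abel--Jacobi map, not from any local isomorphism property of Torelli. So the plan is sound in outline, but as written it assumes rather than proves the two propositions that make up the body of the paper.
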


Theorems \ref{thm_pullback_to_mg1} and \ref{main_thm} provide a bridge between the intersection theory of $\cA_g$ and the Gromov-Witten theory of maps to a varying elliptic curve. A precise
connection of generating series is proposed in \cite{pand_notes}. Projections, as defined in \cite{cmop}, of the Noether-Lefschetz cycle classes have been computed by Iribar L\'{o}pez \cite{il}. The forthcoming work \cite{il} also includes proofs of some of our auxiliary results, e.g. Lemma \ref{maps_proper} and Proposition \ref{NL_normal_bundles}. The Gromov-Witten theory of maps to a varying elliptic curve are also related to the genus 1 Gromov-Witten theory of the Hilbert scheme of points of $\mathbb{C}^2$ by \cite{mnop2,pt}.

It is natural to ask the extent to which Theorems 1.1 and 1.2 can be extended to the boundary. That is, for some compactification $\barA_g$ of $\cA_g$ admitting an extension of the Torelli map, whether the class of the closure (properly interpreted) of $\wNL_{g,d}$ pulls back to a virtual class on $\barM^{q}_{g,1}(\barE,d)$ or a closely related space. Such a statement would facilitate further calculations in Gromov-Witten theory. Formulating an analog of Theorem 1.1 at the boundary of $\cA_g$ already seems to be subtle. This question is currently under investigation.

\subsection{Acknowledgements} We thank Rahul Pandharipande for helpful discussions and for encouraging us to write this note, and thank Aitor Iribar L\'{o}pez for comments on a draft. The second named author is grateful to Johan de Jong for originally suggesting the link between $d$-elliptic loci on $\cM_g$ and $\cA_g$ during his thesis work. Finally, we thank the referee for their careful reading and helpful comments. F.G. was supported by NSF grant DMS-2302548. C.L. was supported by an NSF postdoctoral fellowship, grant DMS-2001976, and an AMS-Simons travel grant.

\section{Moduli of stable maps and virtual classes}

In this section, we construct virtual fundamental classes on spaces of stable maps to a varying elliptic curve, globalizing the usual constructions on $\barM_{g,n}(E,d)$ of stable maps to a fixed elliptic curve. 

\subsection{Stable maps to a varying elliptic curve}\label{kontsevich_family}

\begin{defn}
Let $\pi:\cE\to\cM_{1,1}$ be the universal family of elliptic curves with section $q:\cM_{1,1}\to \cE$.
\end{defn}

\begin{defn}
Let $\barM_{g,n}(\cE,d)$ denote the space of $n$-pointed degree $d$ stable maps $C\to E$, where $E$ is a smooth elliptic curve. Formally, a map $S\to \barM_{g,n}(\cE,d)$ corresponds to a family of pre-stable curves $\cC_S\to S$, a family of smooth elliptic curves $\cE_S\to S$, and a map $f:\cC_S\to\cE_S$ that is a pointed stable map on every fiber.

Let $\mu:\barM_{g,n}(\cE,d)\to\cM_{1,1}$ be the map remembering the target elliptic curve $E$. Let $\epsilon:\barM_{g,n}(\cE,d)\to\barM_{g,n}$ be the map remembering the source curve, with unstable components contracted.

Let $\pi_{g,n}:\barC_{g,n}(\cE,d)\to \barM_{g,n}(\cE,d)$ be the universal curve, and let $f:\barC_{g,n}(\cE,d)\to\cE$ be the universal stable map.

For $i=1,2,\ldots,n$, let $\ev_i:\barM_{g,n}(\cE,d)\to\cE$ denote the evaluation map remembering the image of the $i$-th marked point of a stable map on $\cE$.
\end{defn}

The fibers of $\mu$ are the usual spaces of stable maps $\barM_{g,n}(E,d)$, and the maps $\epsilon,\pi_{g,n},f$ similarly globalize the usual constructions.

\begin{lem}\label{domain_automatically_stable}
    Let $f:(C,p_1,\ldots,p_n)\to E$ be a point of $\barM_{g,n}(\cE,d)$. Then, the pointed curve $(C,p_1,\ldots,p_n)$ is stable.
\end{lem}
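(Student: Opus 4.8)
The plan is to compare the two stability conditions in play. For a point $f\colon (C,p_1,\dots,p_n)\to E$ of $\barM_{g,n}(\cE,d)$, stability is imposed on the \emph{map}: the only constraints are that every component on which $f$ is constant (\emph{contracted}) of genus $0$ carry at least three special points, and that every contracted component of genus $1$ carry at least one. Stability of the underlying pointed curve $(C,p_1,\dots,p_n)$ requires the same numerical bounds but on \emph{all} components of genus $\le 1$, regardless of whether they are contracted. Thus it suffices to control the behavior of $C$ on the components where $f$ is non-constant.

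The key input is that the target $E$ is a smooth curve of genus $1$, and hence receives no non-constant image of a rational curve: a non-constant morphism $\bP^1\to E$ would pull back a nowhere-vanishing holomorphic $1$-form on $E$ to a nonzero holomorphic $1$-form on $\bP^1$, which is impossible since $H^0(\bP^1,\Omega^1)=0$ (equivalently, Riemann--Hurwitz forbids such a cover). Consequently every genus $0$ component of $C$ is contracted by $f$, and so, by stability of the map, each such component already carries at least three special points. This verifies the genus $0$ part of curve-stability, and I expect this contraction step—i.e.\ the absence of rational curves in $E$—to be the single substantive point, with everything else reducing to the definition of a stable map.

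It remains to treat components of positive genus. Components of genus $\ge 2$ impose no condition. A component $C'$ of genus $1$ satisfies the curve-stability bound as soon as it carries one special point, and this fails only if $C'$ has neither a node nor a marked point, i.e.\ if $C'$ is a connected component of $C$ disjoint from the markings. Since the domain of a stable map is connected, this would force $C=C'$ to be a smooth genus $1$ curve with $n=0$; this is the degenerate case $(g,n)=(1,0)$, which does not occur in the situations relevant here (where $g\ge 2$, or at least $n\ge 1$). In every other case each genus $1$ component meets the rest of $C$ in a node or carries a marking, and we conclude that $(C,p_1,\dots,p_n)$ is stable.
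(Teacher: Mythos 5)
Your proof is correct and follows essentially the same route as the paper, whose entire argument is that stability could only fail on a rational component mapping non-trivially to $E$, which is impossible for a smooth elliptic target. You additionally spell out the genus $1$ edge case (a genus $1$ component with no special points forces $(g,n)=(1,0)$), which the paper leaves implicit; this is a harmless elaboration, not a different method.
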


\begin{proof}
    The pointed curve $(C,p_1,\ldots,p_n)$ can only fail to be stable if it contains a rational component mapping non-trivially to $E$. However, this is impossible when $E$ is a smooth elliptic curve.
\end{proof}

Analogously to the absolute setting \cite{behrend}, the relative virtual fundamental class $[\barM_{g,n}(\cE,d)]^{\vir}\in A_{2g-1+n}(\barM_{g,n}(\cE,d))$ is constructed from a relative perfect obstruction theory  $$(R(\pi_{g,n})_{*}f^{*}T_{\cE/\cM_{1,1}})^{\vee}\to \bL_{\barM_{g,n}(\cE,d)/(\barM_{g,n}\times\cM_{1,1})}$$
whose construction we recall. First, we have a map 
\begin{align*}
    f^{*}\bL_{\cE/\cM_{1,1}}&\to\bL_{\barC_{g,n}(\cE,d)/\cM_{1,1}}\\
    &\to \bL_{\barC_{g,n}(\cE,d)/(\barC_{g,n}\times\cM_{1,1})}\\
    &\cong (\pi_{g,n})^{*}\bL_{\barM_{g,n}(\cE,d)/(\barM_{g,n}\times\cM_{1,1})},
\end{align*} 
where $\barC_{g,n}$ is the universal curve over $\barM_{g,n}$. Note that we have used that the diagram
\begin{equation*}
    \xymatrix{
    \barC_{g,n}(\cE,d) \ar[r] \ar[d]^{\pi_{g,n}} & \barC_{g,n} \times\cM_{1,1}\ar[d] \\
    \barM_{g,n}(\cE,d) \ar[r]^(0.45){(\epsilon,\mu)} & \barM_{g,n}\times\cM_{1,1}
    }
\end{equation*}
is Cartesian, which is true by Lemma \ref{domain_automatically_stable}. When considering more general targets, one needs to replace $\barM_{g,n}$ with the stack of pre-stable curves $\mathfrak{M}_{g,n}$.

We therefore obtain
\begin{align*}
    (R(\pi_{g,n})_{*}f^{*}T_{\cE/\cM_{1,1}})^{\vee}&=(R(\pi_{g,n})_{*}f^{*}\bL_{\cE/\cM_{1,1}}^{\vee})^{\vee}\\
    &\to (R(\pi_{g,n})_{*}((\pi_{g,n})^{*}\bL_{\barM_{g,n}(\cE,d)/(\barM_{g,n}\times\cM_{1,1})})^{\vee})^{\vee}\\
    &\cong \bL_{\barM_{g,n}(\cE,d)/(\barM_{g,n}\times\cM_{1,1})}\otimes^{L} (R(\pi_{g,n})_{*}\cO_{\barC_{g,n}(\cE,d)})^{\vee}\\
    &\to \bL_{\barM_{g,n}(\cE,d)/(\barM_{g,n}\times\cM_{1,1})}
\end{align*}

By \cite[Proposition 7.2]{bf}, pulling back the resulting virtual class $[\barM_{g,n}(\cE,d)]^{\vir}$ to the fibers $\barM_{g,n}(E,d)$ of $\mu$ recovers the usual virtual fundamental classes in Gromov-Witten theory.

\subsection{1-pointed stable maps to an elliptic curve}\label{1-pointed_sec}

We now specialize the discussion to the case $n=1$. We will denote the evaluation map $\ev_1:\barM_{g,1}(\cE,d)\to \cE$ simply by $\ev$.

\begin{defn}\label{i_def}
Define the closed immersion $i:\barM_{g,1}^q(\cE,d)\to\barM_{g,1}(\cE,d)$ via the Cartesian diagram
\begin{equation*}
    \xymatrix{
    \barM_{g,1}^q(\cE,d) \ar[r]^{i} \ar[d] & \barM_{g,1}(\cE,d) \ar[d]^{\ev} \\
    \cM_{1,1} \ar[r]^{q} & \cE 
    }
\end{equation*}
The substack $\barM_{g,1}^q(\cE,d)$ parametrizes 1-pointed stable maps whose marked point maps to the origin of its target elliptic curve. 

Let $\pi_g^q:\barC_{g,1}^q(\cE,d)\to \barM_{g,1}^q(\cE,d)$ be the universal curve and $f^q:\barC_{g,1}^q(\cE,d)\to\cE$ be the universal map.

Let $\epsilon\circ i=\epsilon^q:\barM_{g,1}^q(\cE,d)\to\barM_{g,1}$ and $\mu\circ i=\mu^q:\barM_{g,1}^q(\cE,d)\to\cM_{1,1}$ be the maps remembering the source and target curves, respectively.

Let 
\begin{equation*}
    \pr:\barM_{g,1}(\cE,d)\to \barM_{g,1}^q(\cE,d)
\end{equation*}
be the map sending a pointed stable map $f:(C,p)\to (E,q)$ to the map $f-f(p)$, where ``$-f(p)$'' denotes post-composition with the translation on $E$ sending $f(p)$ to the origin $q$. 
\end{defn}
We then have a fibered product decomposition 
\begin{equation*}
    (\pr,\ev):\barM_{g,1}(\cE,d)\cong \barM_{g,1}^q(\cE,d)\times_{\cM_{1,1}}\cE.
\end{equation*}

We now construct a virtual fundamental class on $\barM_{g,1}^q(\cE,d)$; we first explain the idea. The deformations and obstructions of a stable map $[f:C\to E]\in \barM_{g,1}(\cE,d)$ with fixed source and target are given by $H^i(C,f^{*}T_E)$ with $i=0,1$, respectively. The 1-dimensional space of deformations $H^0(C,f^{*}T_E)$ comes from post-composing with translations on $E$. On the substack $\barM_{g,1}^q(\cE,d)$, these deformations go away; indeed, the condition that the marked point of $C$ map to the origin of $E$ is cut out locally by the vanishing of this deformation direction. Accordingly, the obstruction theory on $\barM_{g,1}^q(\cE,d)$ is obtained by truncating that of $\barM_{g,1}(\cE,d)$ to remove the one-dimensional deformations of $f$.

Formally, as on $\barM_{g,1}(\cE,d)$, we have a map of complexes on $\barM^q_{g,1}(\cE,d)$:
\begin{equation*}
(R(\pi_{g}^q)_{*}(f^q)^{*}T_{\cE/\cM_{1,1}})^{\vee}\to \bL_{\barM_{g,1}^q(\cE,d)/\barM_{g,1}\times\cM_{1,1}}.
\end{equation*}
We may compose on the left with the canonical truncation $\tau_{\ge -1}$ to obtain a map  
\begin{equation*}
    e_{\trunc}:(R^1(\pi_{g}^q)_{*}(f^q)^{*}T_{\cE/\cM_{1,1}})^{\vee}[1]\to \bL_{\barM_{g,1}^q(\cE,d)/\barM_{g,1}\times\cM_{1,1}}
\end{equation*}

\begin{prop}\label{trunc_perfobs}
    The map $e_{\trunc}$ is a perfect obstruction theory, and thus gives rise to a \textbf{truncated} virtual fundamental class $[\barM_{g,1}^q(\cE,d)]^{\trunc}\in  A_{2g-1}(\barM_{g,1}^q(\cE,d))$.
\end{prop}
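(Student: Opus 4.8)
The plan is to verify the two defining properties of a perfect obstruction theory for the map $e_{\trunc}\colon F^\bullet\to\bL_{\barM_{g,1}^q(\cE,d)/\barM_{g,1}\times\cM_{1,1}}$, where $F^\bullet=(R^1(\pi_g^q)_*(f^q)^*T_{\cE/\cM_{1,1}})^\vee[1]$. First I would record that, since every target appearing in $\barM_{g,1}^q(\cE,d)$ is a smooth elliptic curve and each source curve is connected, $(f^q)^*T_{\cE/\cM_{1,1}}$ is fiberwise trivial; hence $R^0$ and $R^1$ of $(\pi_g^q)_*(f^q)^*T_{\cE/\cM_{1,1}}$ are locally free of ranks $1$ and $g$ by cohomology and base change. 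Thus $F^\bullet$ is a vector bundle of rank $g$ placed in degree $-1$, in particular perfect of amplitude contained in $[-1,0]$, and its virtual rank $-g$ produces the expected dimension $(3g-1)-g=2g-1$. It then remains to check that $\mathcal H^0(e_{\trunc})$ is an isomorphism and $\mathcal H^{-1}(e_{\trunc})$ is surjective.

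For the $\mathcal H^0$ condition I would show directly that $\Omega_{\barM_{g,1}^q(\cE,d)/\barM_{g,1}\times\cM_{1,1}}=0$, so that both sides of $\mathcal H^0(e_{\trunc})$ vanish (as $F^\bullet$ is concentrated in degree $-1$). This is a pointwise computation: the relative tangent space over $\barM_{g,1}\times\cM_{1,1}$ at a map $[f\colon(C,p)\to(E,q)]$ consists of those first-order deformations of $f$ with $(C,p)$ and $E$ fixed that also preserve $f(p)=q$, i.e. of the kernel of the evaluation $H^0(C,f^*T_E)\to T_{E,q}$ at the marked point. Since $H^0(C,f^*T_E)\cong\bC$ is generated by the nowhere-vanishing translation field, this evaluation is injective, so the relative tangent space is $0$ at every point; Nakayama then gives the vanishing of the coherent sheaf $\Omega$. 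Conceptually, this is the statement that the single relative deformation direction of $\barM_{g,1}(\cE,d)$ --- post-composition with translations --- is exactly transverse to the divisor $\barM_{g,1}^q(\cE,d)=\ev^{-1}(q)$.

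For the $\mathcal H^{-1}$ condition I would compare $e_{\trunc}$ with the perfect obstruction theory $\phi\colon E^\bullet=(R(\pi_{g,1})_*f^*T_{\cE/\cM_{1,1}})^\vee\to\bL_{\barM_{g,1}(\cE,d)/\barM_{g,1}\times\cM_{1,1}}$ on $\barM_{g,1}(\cE,d)$. By base change for the universal curve and map across the Cartesian square defining $i$, the map underlying $e_{\trunc}$ (before truncation) is identified with the composite $Li^*E^\bullet\xrightarrow{Li^*\phi}Li^*\bL_{\barM_{g,1}(\cE,d)/\barM_{g,1}\times\cM_{1,1}}\to\bL_{\barM_{g,1}^q(\cE,d)/\barM_{g,1}\times\cM_{1,1}}$, the last arrow being the canonical map of the transitivity triangle for $\barM_{g,1}^q(\cE,d)\xrightarrow{i}\barM_{g,1}(\cE,d)\to\barM_{g,1}\times\cM_{1,1}$; and truncation leaves $\mathcal H^{-1}$ unchanged. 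Now $\phi$ being a perfect obstruction theory means $\mathrm{Cone}(\phi)\in D^{\le-2}$, and since $Li^*$ is right $t$-exact it preserves $D^{\le-2}$, so $Li^*\phi$ is again surjective on $\mathcal H^{-1}$. It therefore suffices to see that $\mathcal H^{-1}$ of the last arrow is surjective. Here I would use that $\bL_{\barM_{g,1}^q(\cE,d)/\barM_{g,1}(\cE,d)}=N^\vee[1]$ for the conormal line bundle $N^\vee$ of the regular codimension-one embedding $i$, and that $\mathcal H^0\bigl(Li^*\bL_{\barM_{g,1}(\cE,d)/\barM_{g,1}\times\cM_{1,1}}\bigr)$ is the restriction of $\Omega_{\barM_{g,1}(\cE,d)/\barM_{g,1}\times\cM_{1,1}}$, which by $\phi$ is a line bundle. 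The relevant piece of the long exact sequence reads $N^\vee\xrightarrow{\delta}(\text{line bundle})\to\Omega_{\barM_{g,1}^q(\cE,d)/\barM_{g,1}\times\cM_{1,1}}=0$, so $\delta$ is a surjection of line bundles, hence an isomorphism; exactness then forces $\mathcal H^{-1}$ of the last arrow to be surjective, and composing the two surjections yields $\mathcal H^{-1}(e_{\trunc})$ surjective.

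I expect the main obstacle to be the bookkeeping in the last paragraph --- specifically the identification of $e_{\trunc}$ with the truncated restriction of $\phi$ (which rests on the compatibility of the obstruction-theory construction with the base change defining $i$) and the verification that $\mathcal H^0$ of the restricted complex is the rank-one sheaf coming from $\phi$. Once the relevant $\mathcal H^0$ and $\mathcal H^{-1}$ terms are pinned down as line bundles, the conclusion that $\delta$ is an isomorphism is forced by the vanishing established in the second paragraph together with a rank count, so the crux is really the transversality of the translation direction to $\barM_{g,1}^q(\cE,d)$ rather than any further computation.
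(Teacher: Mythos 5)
Your proposal is correct. The degree-$0$ half is the same as the paper's: the key vanishing $H^0(C,f^{*}T_E(-p))=0$ (your injectivity of the evaluation $H^0(C,f^{*}T_E)\to T_{E,q}$) makes $(\epsilon^q,\mu^q)$ formally unramified, so both sides of $\mathcal{H}^0(e_{\trunc})$ are zero. You diverge on the degree-$(-1)$ surjectivity, which the paper dispatches in one line by asserting that the map onto the conormal sheaf $C_{\barM_{g,1}^q(\cE,d)/\barM_{g,1}\times\cM_{1,1}}$ is a surjection ``as in the case of the Gromov--Witten obstruction theory,'' i.e.\ by rerunning the usual deformation-theoretic argument on $\barM_{g,1}^q(\cE,d)$ directly. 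You instead deduce it formally from the ambient perfect obstruction theory $\phi$ on $\barM_{g,1}(\cE,d)$: right $t$-exactness of $Li^{*}$ gives surjectivity of $\mathcal{H}^{-1}(Li^{*}\phi)$, and the transitivity triangle for $i$ together with the degree-$0$ vanishing forces the connecting map $\delta$ to be injective, hence the remaining arrow surjective on $\mathcal{H}^{-1}$. (Two small points: you do not actually need $i$ to be a regular embedding --- $\mathcal{H}^{-1}(\bL_{i})$ is a quotient of the pulled-back conormal line bundle of $q$ for any closed immersion, and your surjection onto the line bundle $i^{*}\Omega_{\barM_{g,1}(\cE,d)/\barM_{g,1}\times\cM_{1,1}}$ already forces $\delta$ to be an isomorphism; and the functoriality identifying $e_{\trunc}$ with the truncation of $Li^{*}\phi$ followed by the transitivity map is exactly the compatibility the paper sets up in the proof of Proposition \ref{trunc_gw_comparison}.) What your route buys is self-containedness: the only geometric input is the transversality of the translation direction to $\ev^{-1}(q)$, everything else being formal; what the paper's route buys is brevity, at the cost of leaning on the reader's familiarity with the standard proof that the Gromov--Witten obstruction theory is perfect.
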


\begin{proof}
We need to show that $e_{\trunc}$ is an isomorphism in degree 0 and surjection in degree $-1$. For any $f\in \barM_{g,1}^q(\cE,d)$, we have $H^0(C,f^{*}T_E(-p))=0$, so it follows that the map 
\begin{equation*}
(\epsilon^q,\mu^q):\barM_{g,1}^q(\cE,d)\to\barM_{g,1}\times\cM_{1,1}
\end{equation*}
is formally unramified. Thus, the restriction of $e_{\trunc}$ to degree 0 is the isomorphism of zero objects, and the restriction to degree $-1$ gives, as in the case of the Gromov-Witten obstruction theory of $\barM_{g,1}(E,d)$, the surjection of sheaves
\begin{equation*}
    (R^1(\pi_{g}^q)_{*}(f^q)^{*}T_{\cE/\cM_{1,1}})^{\vee}\to \bL^{\ge-1}_{\barM_{g,1}^q(\cE,d)/\barM_{g,1}\times\cM_{1,1}}\cong C_{\barM_{g,1}^q(\cE,d)/\barM_{g,1}\times\cM_{1,1}},
\end{equation*}
where $C_{\barM_{g,1}^q(\cE,d)/\barM_{g,1}\times\cM_{1,1}}$ is the conormal sheaf.
\end{proof}

In fact, the truncated class we have constructed is precisely the pullback of the virtual class on the ambient space $\barM_{g,1}(\cE,d)$.

\begin{prop}\label{trunc_gw_comparison}
We have
$$[\barM_{g,1}^q(\cE,d)]^{\trunc}=q^{!}([\barM_{g,1}(\cE,d)]^{\vir}),$$
where $q^{!}$ denotes Gysin pullback in the diagram of Definition \ref{i_def}.
\end{prop}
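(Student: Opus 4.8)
The plan is to factor the Gysin pullback through an intermediate base on which the relevant obstruction theory becomes honest rather than truncated, and then invoke the compatibility of virtual classes with regular embeddings. Write $M=\barM_{g,1}(\cE,d)$, $B_0=\barM_{g,1}\times\cM_{1,1}$, and introduce the intermediate base $B_1=\barM_{g,1}\times\cE$, so that the structure map $M\to B_0$ factors as $M\xrightarrow{(\epsilon,\ev)}B_1\xrightarrow{\id\times\pi}B_0$ with $B_1\to B_0$ smooth of relative dimension $1$. Let $E^{\bullet}:=(R(\pi_{g,1})_{*}f^{*}T_{\cE/\cM_{1,1}})^{\vee}$ be the obstruction theory of $M/B_0$, let $p$ denote the universal marked point (a section of $\pi_{g,1}$), and set $E_1^{\bullet}:=(R(\pi_{g,1})_{*}f^{*}T_{\cE/\cM_{1,1}}(-p))^{\vee}$. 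From the twisting sequence
\[ 0\to f^{*}T_{\cE/\cM_{1,1}}(-p)\to f^{*}T_{\cE/\cM_{1,1}}\to (f^{*}T_{\cE/\cM_{1,1}})|_{p}\to 0 \]
together with the fact that the translation deformation is nowhere vanishing, so that evaluation at $p$ gives an isomorphism $R^{0}(\pi_{g,1})_{*}f^{*}T_{\cE/\cM_{1,1}}\xrightarrow{\sim}(f^{*}T_{\cE/\cM_{1,1}})|_{p}=\ev^{*}T_{\cE/\cM_{1,1}}$, one reads off that $R^{0}(\pi_{g,1})_{*}f^{*}T_{\cE/\cM_{1,1}}(-p)=0$ and $R^{1}(\pi_{g,1})_{*}f^{*}T_{\cE/\cM_{1,1}}(-p)\cong R^{1}(\pi_{g,1})_{*}f^{*}T_{\cE/\cM_{1,1}}$. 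Hence $E_1^{\bullet}\to\bL_{M/B_1}$ is a genuine perfect obstruction theory (its $h^{0}$ vanishes), and after pulling back along $i$ it agrees, by cohomology and base change in top degree, with the truncated theory of Proposition \ref{trunc_perfobs}, i.e. $i^{*}E_1^{\bullet}\cong e_{\trunc}$. I would then verify that under this identification the induced structure map $i^{*}E_1^{\bullet}\to\bL_{M^{q}/B_0}$ matches the truncation map defining $e_{\trunc}$; this amounts to checking that the canonical truncation $\tau_{\ge-1}$ is precisely the operation induced by the twist-by-$p$ sequence, and it is the one genuinely delicate compatibility in the argument.

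Second, I would show that passing from $B_0$ to $B_1$ leaves the virtual class unchanged. Dualizing the pushforward of the twisting sequence gives a distinguished triangle
\[ \ev^{*}\Omega_{\cE/\cM_{1,1}}\to E^{\bullet}\to E_1^{\bullet}\to \ev^{*}\Omega_{\cE/\cM_{1,1}}[1] \]
lying over the cotangent triangle of $M\to B_1\to B_0$, whose leftmost term is $\bL_{B_1/B_0}|_{M}$. This exhibits $(E^{\bullet}\to\bL_{M/B_0})$ and $(E_1^{\bullet}\to\bL_{M/B_1})$ as a compatible pair along the smooth morphism $B_1\to B_0$, so by the invariance of the virtual class under smooth extension of the base \cite{bf} we obtain $[M,E_1^{\bullet}]^{\vir}=[M,E^{\bullet}]^{\vir}=[M]^{\vir}$ in $A_{2g}(M)$. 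A dimension count is consistent: the relative virtual dimension over $B_1$ is $\chi(\cO_C(-p))=-g$, and $\dim B_1+(-g)=3g-g=2g=(3g-1)+(1-g)=\dim B_0+\chi(\cO_C)$.

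Finally, the square of Definition \ref{i_def} identifies $M^{q}$ with the fiber product $M\times_{B_1}B_0$ along the regular embedding $\id\times q\colon B_0\hookrightarrow B_1$ of codimension $1$, whose refined Gysin homomorphism is exactly $q^{!}$ (the $\barM_{g,1}$ factor being inert). Since $e_{\trunc}\cong i^{*}E_1^{\bullet}$ is precisely the obstruction theory obtained by base change of $E_1^{\bullet}$, the functoriality of the virtual class under Gysin pullback along a regular embedding \cite{bf} yields $[M^{q}]^{\trunc}=[M^{q},i^{*}E_1^{\bullet}]^{\vir}=q^{!}[M,E_1^{\bullet}]^{\vir}$, and combining with the previous step gives $[M^{q}]^{\trunc}=q^{!}[M]^{\vir}$. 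I expect the main obstacle to be the bookkeeping of the first step, namely confirming that the truncation map produced in Proposition \ref{trunc_perfobs} literally coincides with the base-changed obstruction theory $i^{*}E_1^{\bullet}$; once that identification is secured, the remaining two steps are formal consequences of the standard functoriality of virtual classes.
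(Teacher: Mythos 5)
Your argument is correct, but it takes a genuinely different route from the paper's. You work with the triple $\barM_{g,1}(\cE,d)\to\barM_{g,1}\times\cE\to\barM_{g,1}\times\cM_{1,1}$ and introduce the point-constrained theory $E_1^{\bullet}=(R(\pi_{g,1})_{*}f^{*}T_{\cE/\cM_{1,1}}(-p))^{\vee}$ on the full space of stable maps; the vanishing of $R^{0}$ (because evaluation of translations at $p$ is an isomorphism onto $\ev^{*}T_{\cE/\cM_{1,1}}$) makes this an honest perfect obstruction theory over $\barM_{g,1}\times\cE$, you compare it with the standard theory via Behrend--Fantechi compatibility over the smooth projection, and then restrict along the regular embedding $\id\times q$. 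The paper never twists down: it uses the product decomposition $\barM_{g,1}(\cE,d)\cong\barM^q_{g,1}(\cE,d)\times_{\cM_{1,1}}\cE$ furnished by the translation retraction $\pr$, applies \cite[Proposition 7.5]{bf} to the triple $\barM_{g,1}(\cE,d)\to\barM^q_{g,1}(\cE,d)\to\barM_{g,1}\times\cM_{1,1}$ to obtain $(\id,\pi)^{!}([\barM^q_{g,1}(\cE,d)]^{\trunc})=[\barM_{g,1}(\cE,d)]^{\vir}$, and then applies $q^{!}$ together with $q^{!}\circ(\id,\pi)^{!}=\id$. Both routes invoke the same two functoriality statements, but the delicate compatibility sits in different places: in the paper it is the identification of the cone of $\pr^{*}e_{\trunc}\to(R(\pi_{g,1})_{*}f^{*}T_{\cE/\cM_{1,1}})^{\vee}$ with $(\epsilon,\ev)^{*}\bL_{(\barM_{g,1}\times\cE)/(\barM_{g,1}\times\cM_{1,1})}$ compatibly with the maps to $\bL_{\barM_{g,1}(\cE,d)/\barM^q_{g,1}(\cE,d)}$, while in yours it is the identification of $i^{*}E_1^{\bullet}$ with the truncation $e_{\trunc}$, which you correctly flag as the point requiring care (both reduce to the observation that an obstruction theory concentrated in degree $-1$ over an unramified morphism is pinned down by its surjection onto the conormal sheaf). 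Your version has the advantage of not using the retraction $\pr$ at all --- only the weaker fact that $H^{0}(C,f^{*}T_E)\to T_{E,f(p)}$ is an isomorphism --- so it is the standard treatment of evaluation constraints in Gromov--Witten theory and would adapt to targets without a translation action, a direction the paper flags as of interest at the boundary; the paper's version keeps every term of the comparison triangle either a line bundle in degree $0$ or a sheaf in degree $-1$, which makes the remaining verifications more concrete.
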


\begin{proof}
We apply \cite[Proposition 7.5]{bf} to the right half of the Cartesian diagram
\begin{equation*}
    \xymatrix{
    \barM^q_{g,1}(\cE,d) \ar[d]_{(\epsilon^q,\mu^q)} \ar[r]^{i} \ar[d] & \barM_{g,1}(\cE,d) \ar[r]^{\pr} \ar[d]_{(\epsilon,\ev)} & \barM^q_{g,1}(\cE,d) \ar[d]^{(\epsilon^q,\mu^q)} \\
    \barM_{g,1}\times\cM_{1,1} \ar[r]^{(\id,q)} & \barM_{g,1}\times\cE \ar[r]^(0.43){(\id,\pi)} & \barM_{g,1}\times\cM_{1,1}.
    }
\end{equation*}
of stacks over $\barM_{g,1}\times\cM_{1,1}$. It suffices to construct a map of distinguished triangles
\begin{equation*}
    \xymatrix{
    \pr^{*}(R^1(\pi_{g}^q)_{*}(f^q)^{*}T_{\cE/\cM_{1,1}})^{\vee}[1] \ar[r] \ar[d] & (R(\pi_{g,1})_{*}f^{*}T_{\cE/\cM_{1,1}})^{\vee} \ar[r] \ar[d] & (\epsilon,\ev)^{*}\bL_{(\barM_{g,1}\times\cE)/(\barM_{g,1}\times\cM_{1,1})} \ar[d] \\
    \pr^{*}\bL_{\barM^q_{g,1}(\cE,d)/(\barM_{g,1}\times
    \cM_{1,1})} \ar[r] & \bL_{\barM_{g,1}(\cE,d)/(\barM_{g,1}\times\cM_{1,1})} \ar[r] &  \bL_{\barM_{g,1}(\cE,d)/\barM^q_{g,1}(\cE,d)}
    }
\end{equation*}
This will show that $(\id,\pi)^{!}([\barM_{g,1}^q(\cE,d)]^{\trunc})=[\barM_{g,1}(\cE,d)]$, and applying $q^!$ to both sides yields $[\barM_{g,1}^q(\cE,d)]^{\trunc}=q^{!}([\barM_{g,1}(\cE,d)]^{\vir})$.

The square on the left arises from the Cartesian diagram
\begin{equation*}
    \xymatrix{
    \barC_{g,1}(\cE,d) \ar[r]^{\wt{\pr}} \ar[d]_{\pi_{g,1}} & \barC^q_{g,1}(\cE,d) \ar[d]^{\pi_{g}^q} \ar[r]^(0.63){f^q} & \cE  \\
    \barM_{g,1}(\cE,d) \ar[r]^{\pr} & \barM^q_{g,1}(\cE,d) 
     & 
     }
\end{equation*}
where the horizontal arrows are flat and whose pullbacks therefore commute with (derived) pushforward; we obtain the compatibility
\begin{equation*}
    \xymatrix{
    \pr^{*}(R(\pi_{g}^q)_{*}(f^q)^{*}T_{\cE/\cM_{1,1}})^{\vee} \ar[r]^(0.53){\sim} \ar[d] & (R(\pi_{g,1})_{*}f^{*}T_{\cE/\cM_{1,1}})^{\vee} \ar[d] \\
    \pr^{*}\bL_{\barM^q_{g,1}(\cE,d)/(\barM_{g,1}\times\cM_{1,1})} \ar[r] & \bL_{\barM_{g,1}(\cE,d)/(\barM_{g,1}\times\cM_{1,1})}}
\end{equation*}
and then pre-compose in the top left with truncation.

Now, the mapping cone of $\pr^{*}(R^1(\pi_{g}^q)_{*}(f^q)^{*}T_{\cE/\cM_{1,1}})^{\vee}[1] \to (R(\pi_{g,1})_{*}f^{*}T_{\cE/\cM_{1,1}})^{\vee}$ is given by the canonical truncation $$\tau_{\ge0}(R(\pi_{g,1})_{*}f^{*}T_{\cE/\cM_{1,1}})^{\vee}=((\pi_{g,1})_{*}f^{*}T_{\cE/\cM_{1,1}})^{\vee},$$
where on the right hand side we have a line bundle in degree 0. Therefore, we have a map of distinguished triangles
\begin{equation*}
    \xymatrix{
    \pr^{*}(R^1(\pi_{g}^q)_{*}(f^q)^{*}T_{\cE/\cM_{1,1}})^{\vee}[1] \ar[r] \ar[d] & (R(\pi_{g,1})_{*}f^{*}T_{\cE/\cM_{1,1}})^{\vee} \ar[r] \ar[d] & ((\pi_{g,1})_{*}f^{*}T_{\cE/\cM_{1,1}})^{\vee} \ar[d] \\
    \pr^{*}\bL_{\barM^q_{g,1}(\cE,d)/(\barM_{g,1}\times
    \cM_{1,1})} \ar[r] & \bL_{\barM_{g,1}(\cE,d)/(\barM_{g,1}\times\cM_{1,1})} \ar[r] &  \bL_{\barM_{g,1}(\cE,d)/\barM^q_{g,1}(\cE,d)}
    }
\end{equation*}

It now remains to show that we may replace the object $((\pi_{g,1})_{*}f^{*}T_{\cE/\cM_{1,1}})^{\vee}$ in the above diagram with $(\epsilon,\ev)^{*}\bL_{(\barM_{g,1}\times\cE)/(\barM_{g,1}\times\cM_{1,1})}$. It suffices to construct a commutative diagram
\begin{equation*}
    \xymatrix{
    ((\pi_{g,1})_{*}f^{*}T_{\cE/\cM_{1,1}})^{\vee} \ar[rr] \ar[rd]  &  & (\epsilon,\ev)^{*}\bL_{(\barM_{g,1}\times\cE)/(\barM_{g,1}\times\cM_{1,1})} \ar[ld]  \\
     & \bL_{\barM_{g,1}(\cE,d)/\barM^q_{g,1}(\cE,d)} &
    }
\end{equation*}
where the horizontal arrow is an isomorphism and the vertical maps are the given ones. 

Note that the canonical map $(\epsilon,\ev)^{*}\bL_{(\barM_{g,1}\times\cE)/(\barM_{g,1}\times\cM_{1,1})} \to \bL_{\barM_{g,1}(\cE,d)/\barM^q_{g,1}(\cE,d)}$ is an isomorphism of line bundles in degree zero. Moreover, the composition
\begin{equation*}
(R(\pi_{g,1})_{*}f^{*}T_{\cE/\cM_{1,1}})^{\vee}\to \bL_{\barM_{g,1}(\cE,d)/(\barM_{g,1}\times\cM_{1,1})}\to \bL_{\barM_{g,1}(\cE,d)/\barM^q_{g,1}(\cE,d)}
\end{equation*}
induces isomorphisms on cohomology sheaves in degree 0. Indeed, the first map is a perfect obstruction theory, and the complex $\pr^{*}\bL_{\barM^q_{g,1}(\cE,d)/(\barM_{g,1}\times\cM_{1,1})}$ is zero in degree $0$ and above. Thus, the map $ ((\pi_{g,1})_{*}f^{*}T_{\cE/\cM_{1,1}})^{\vee}\to  \bL_{\barM_{g,1}(\cE,d)/\barM^q_{g,1}(\cE,d)}$ of mapping cones is also an isomorphism of line bundles in degree zero. The horizontal isomorphism $((\pi_{g,1})_{*}f^{*}T_{\cE/\cM_{1,1}})^{\vee} \to (\epsilon,\ev)^{*}\bL_{(\barM_{g,1}\times\cE)/(\barM_{g,1}\times\cM_{1,1})}$ above may then be obtained by composition. This completes the proof.

\end{proof}

\section{$d$-elliptic maps of abelian varieties}

\begin{defn}
Let $X$ be a principally polarized abelian variety (ppav) and $E$ an elliptic curve. We say that a map $h:X\to E$ (resp. a map $h:E\to X$) of abelian varieties has degree $d$ if the composite map $h\circ h^{\vee}:E\to E$ (resp. $h^{\vee}\circ h$) is given by multiplication by $d$. We say that the map $h:X\to E$ is \emph{$d$-elliptic}.
\end{defn}

Note that $\deg(h)=\deg(h^{\vee})$. The degree of $h:E\to X$ is also equal to the degree of the pullback of the principal polarization on $X$ to $E$. We will pass freely between the map $h:X\to E$ and its dual $h^\vee:E\to X$, identifying the two ppavs with their duals.

\begin{defn}
Let $\wNL_{g,d}$ be the stack of maps $h:X\to E$ of degree $d$, where $X$ is a ppav of dimension $g$ and $E$ is an elliptic curve. 

Abusing notation, let $\epsilon:\wNL_{g,d}\to\cA_g$ and $\mu:\wNL_{g,d}\to\cA_1$ be the maps remembering the source and target, respectively.
\end{defn}

\begin{rem}
The letters ``NL'' stand for ``Noether-Lefschetz,'' as the (dual) map $h^\vee: E\to X$ gives an exceptional algebraic cycle on $X$. We use the notation $\wNL_{g,d}$ to distinguish from the spaces $\NL_{g,d}$ (see \cite{pand_notes}) of \emph{immersions} $h:E\to X$ of degree $d$.
\end{rem}

The moduli space $\wNL_{g,d}$ may be constructed as a closed substack of the relative Hom stack $\Hom(\cX_g,\cX_1)\to\cA_g\times\cA_1$, where $\pi_i:\cX_i\to\cA_i$ are the universal families.
Let $\cX_{\NL,g}=\cX_g\times_{\cA_g} \wNL_{g,d}$ be the pullback of the universal families over $\wNL_{g,d}$, and let $h_{\NL}:\cX_{\NL,g} \to \cX_{1}$ be the universal $d$-elliptic map.

\begin{lem}\label{maps_proper}
    The morphisms $\epsilon:\wNL_{g,d}\to\cA_g$ and $(\epsilon,\mu):\wNL_{g,d}\to\cA_g\times \cA_1$ are proper.
\end{lem}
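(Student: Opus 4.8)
The plan is to verify the valuative criterion of properness for $\epsilon$ directly, and then to deduce properness of $(\epsilon,\mu)$ formally using that $\cA_1$ is a separated Deligne--Mumford stack. Since $\wNL_{g,d}$ is cut out of the separated, unramified relative Hom-scheme $\Hom(\cX_g,\cX_1)$ by the degree-$d$ condition, which bounds the homomorphisms in play, the morphism $\epsilon$ is of finite type and separated (using also that $\cA_1$ is of finite type over the base); the real content is universal closedness. So let $R$ be a DVR with fraction field $K$ (which I may enlarge to a finite extension, harmlessly, as the extension results below are stable under such base change), let $\cX\to\Spec R$ be the abelian scheme classified by a map $\Spec R\to\cA_g$, and suppose given over the generic point a degree-$d$ map $h_K\colon \cX_K\to E_K$ to an elliptic curve $E_K/K$. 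I must produce an elliptic curve $\cE\to\Spec R$ together with a degree-$d$ homomorphism $h\colon\cX\to\cE$ restricting to $(E_K,h_K)$, and show it is unique.

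The first and main step is to show that the target $E_K$ has good reduction over $R$. Let $\cN$ be the N\'eron model of $E_K$. Since $\cX$ is smooth over $R$, the N\'eron mapping property extends $h_K$ to a homomorphism $h\colon\cX\to\cN$. Because $h_K\circ h_K^\vee=[d]_{E_K}$ is an isogeny, $h_K$ is surjective, so the scheme-theoretic image $Z\subseteq\cN$ of $h$ is integral, proper over $R$ (being the image of the proper $R$-scheme $\cX$), and has $1$-dimensional fibers. Its special fiber $Z_s$ is therefore a complete curve lying inside a component of the special fiber $\cN_s$, i.e.\ inside a torsor under the identity component $\cN^0_s$. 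A complete curve cannot lie in an affine group, so $\cN^0_s$ is neither $\mathbb{G}_m$ nor $\mathbb{G}_a$; hence $E_K$ has good reduction, and $\cE\colonequals\cN^0$ is an elliptic curve over $R$ extending $E_K$. (Alternatively one may simply invoke that good reduction of abelian varieties is inherited by quotients.) This is where it is essential that $\cX/R$ is proper and $h_K$ surjective: since $\cA_1$ is not proper, the multiplicative and additive degenerations of $E$ must be ruled out by hand, and this is exactly what these hypotheses provide.

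Granting good reduction, the remaining steps are routine. The extension $h\colon\cX\to\cE$ is a homomorphism by rigidity, as it carries the identity section to the identity section: the two sections agree over $K$ since $h_K$ is a homomorphism, hence over $R$ by separatedness of $\cE$. To see that $h$ is again degree $d$, observe that $h\circ h^\vee$ and $[d]_{\cE}$ are homomorphisms $\cE\to\cE$ agreeing over the dense open $\cE_K$, hence equal. Uniqueness of the extension (and thereby separatedness) follows in the same spirit: the target is forced to be the identity component of the N\'eron model, which is unique, and two homomorphisms agreeing generically agree integrally. This verifies the valuative criterion, so $\epsilon$ is proper.

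Finally I deduce properness of $(\epsilon,\mu)$. Factor it as $\wNL_{g,d}\xrightarrow{\Gamma_\mu}\wNL_{g,d}\times\cA_1\xrightarrow{\epsilon\times\id}\cA_g\times\cA_1$, where $\Gamma_\mu$ is the graph of $\mu$. The map $\Gamma_\mu$ is the base change of the diagonal $\Delta_{\cA_1}$ along $\mu\times\id\colon\wNL_{g,d}\times\cA_1\to\cA_1\times\cA_1$; since $\cA_1$ is a separated Deligne--Mumford stack its diagonal is proper, so $\Gamma_\mu$ is proper. Moreover $\epsilon\times\id_{\cA_1}$ is the base change of $\epsilon$ and hence proper by the previous paragraph. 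The composite $(\epsilon,\mu)$ is therefore proper, as desired.
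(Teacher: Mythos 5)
Your proof is correct, but it runs the valuative criterion in the opposite direction from the paper and with different machinery. The paper works with the dual map $h_K^{\vee}\colon E_K\to \cX_K$: it factors this as an isogeny $E_K\to E'_K$ followed by a closed immersion $E'_K\hookrightarrow \cX_K$, takes the closure $\cE'$ of $E'_K$ inside the abelian scheme $\cX$ over $\Spec(R)$, and argues that $\cE'$ is smooth over $R$ because the special fiber $X_0$ contains no rational curves; it then separately extends the isogeny $E_K\to E'_K$ to an isogeny $\cE\to\cE'$ after a finite base change. You instead work with $h_K\colon \cX_K\to E_K$, extend it to the N\'eron model of $E_K$ via the N\'eron mapping property, and rule out bad reduction by noting that the proper one-dimensional image of the special fiber of $\cX$ cannot sit inside a torsor under an affine group. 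The two arguments isolate the same geometric fact---the elliptic curve cannot degenerate because it is tethered to the non-degenerating abelian scheme $\cX$---but yours trades the paper's ``no rational curves in an abelian variety'' for ``no complete curves in an affine group,'' and the N\'eron mapping property lets you bypass the step of extending the isogeny $E_K\to E'_K$, which is exactly where the paper invokes its finite base change. Your formal reduction of the properness of $(\epsilon,\mu)$ to that of $\epsilon$, via the graph of $\mu$ and the properness of the diagonal of the separated stack $\cA_1$, is also a cleaner packaging of the paper's one-line remark that ``the properness of $(\epsilon,\mu)$ is similar,'' though note the paper's version of that remark contains slightly more information (the extension one constructs automatically has target the \emph{given} elliptic curve over $R$), which your graph argument recovers formally.
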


\begin{proof}
    Let $t:\Spec(R)\to \cA_g$ be a morphism corresponding to a family $\wt{X}/\Spec(R)$ of ppavs over a discrete valuation ring $R$. Let $\Frac(R)=K$ be the fraction field of $R$, and let $\wt{t}:\Spec(K)\to \wNL_{g,d}$ be a point lifting $t$ corresponding to a $d$-elliptic map $h:E_K\to \cX_K$. The map $h$ factors as $E_K\to E'_K\hookrightarrow \cX_K$, where $E_K\to E'_K$ is an isogeny of elliptic curves and $E'_K\hookrightarrow \cX_K$ is a closed immersion.

    Let $\cE'$ be the closure of $E'_K$ in $\cX$, which is smooth over $\Spec(R)$, because the special fiber $X_0$ contains no rational curves. Then, possibly after a finite base change, the isogeny $E_K\to E'_K$ may be extended to an isogeny $\cE\to \cE'$ over $\Spec(R)$, and the composition $\cE\to \cX$ extends $h$.
    
    The properness of $(\epsilon,\mu)$ is similar. In this case, the smooth elliptic curve $\cE/\Spec(R)$ obtained as above will automatically agree with that coming from a given morphism $t:\Spec(R)\to \cA_g\times\cA_1$.
\end{proof}

\subsection{Deformation Theory}

We next study the map $(\epsilon,\mu):\wNL_{g,d}\to \cA_g\times\cA_1$ locally. Throughout this section, the word ``deformation'' refers to a first-order deformation.

Let $X$ be a polarized abelian variety (not necessarily principally polarized) and let $\theta_X\in H^1(X,\Omega_X)$ be the class associated to the polarization. Recall that the space of deformations of $X$ as a variety is given by $H^1(X,T_X)$, and the space $\Def(X)$ of deformations of $X$ as a polarized abelian variety is given by the kernel $\Def(X):=H^1(X,T_X)^{\alg}$ of the surjection 
\begin{equation*}
H^1(X,T_X)\to H^2(X,\cO_X)
\end{equation*}
induced by cup product with $\theta_X$, see \cite[Theorem 3.3.11]{sernesi}.

Let $h:X\to E$ be a $d$-elliptic map, where $X$ is a ppav, and let $\Def(h)$ be its space of deformations. The goal of this section is to prove:
\begin{prop}\label{Def(h)}
    There exists a canonical short exact sequence
    \begin{equation*}
        0\to \Def(h) \to \Def(X)\oplus\Def(E) \to H^1(X,h^{*}T_E)\to 0.
    \end{equation*}
\end{prop}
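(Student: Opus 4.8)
The plan is to construct the sequence by comparing the deformation theory of the map $h : X \to E$ with the deformation theories of its source and target, using the fact that all objects in sight are abelian varieties so that tangent bundles are trivial and cohomology is controlled by Hodge theory. First I would recall the standard deformation-theoretic identification: a deformation of the morphism $h$ (with $X$, $E$ allowed to vary) fits into the usual exact sequence
\begin{equation*}
0 \to H^0(X, h^{*}T_E) \to \Def(h) \to \Def(X) \oplus \Def(E) \to H^1(X, h^{*}T_E),
\end{equation*}
where the map to $\Def(X)\oplus\Def(E)$ records the induced deformations of source and target and the final map is the obstruction to deforming $h$ compatibly with a given pair of deformations. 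The first task is therefore to show $H^0(X, h^{*}T_E) = 0$, so the sequence begins with $\Def(h)$ injecting into $\Def(X)\oplus\Def(E)$; this holds because $T_E \cong \cO_E$, so $h^{*}T_E \cong \cO_X$, and $H^0(X,\cO_X)$ is one-dimensional—but the section corresponding to translations on $E$ must be accounted for. Here I would use that we are deforming $h$ as a \emph{map of abelian varieties} (equivalently a map of pointed varieties fixing the origins), so the translation deformations are excluded and the relevant $H^0$ vanishes.

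The substantive content is the surjectivity of the map $\Def(X)\oplus\Def(E)\to H^1(X,h^{*}T_E)$. My approach would be to identify this map explicitly via Hodge theory. Since $h^{*}T_E\cong\cO_X$, we have $H^1(X,h^{*}T_E)\cong H^1(X,\cO_X)\cong H^{0,1}(X)$, which is $g$-dimensional. I would describe the two components of the map separately. The deformations of $E$ contribute through the pullback $h^{*}: H^1(E,\cO_E)\to H^1(X,\cO_X)$, which is injective (as $h$ is surjective, being nonzero up to isogeny), landing in a one-dimensional subspace. The deformations of $X$ contribute through the composite $\Def(X)=H^1(X,T_X)^{\mathrm{alg}}\to H^1(X,T_X)\to H^1(X,h^{*}T_E)$, where the last arrow is induced by the differential $dh: T_X\to h^{*}T_E$. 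The key computation is to show these two contributions together span all of $H^1(X,\cO_X)$.

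The main obstacle, and the step I would spend the most care on, is verifying that the image of $\Def(X)$ is large enough—more precisely that the map $H^1(X,T_X)^{\mathrm{alg}}\to H^1(X,\cO_X)$ has image of codimension exactly one, complementary to the image of $h^{*}$ from $\Def(E)$. I would handle this by choosing a splitting up to isogeny $X\sim E\times Y$ (using that $h:E\to X$ dualizes to a $d$-elliptic map and that $J\sim E\times Y$ as in the introduction), under which $H^1(X,T_X)$ and the $\mathrm{alg}$-condition decompose according to the Hodge structure of the product. Concretely, using $T_X\cong H^1(X,\cO_X)^{\vee}\otimes\cO_X$ and the identification of $\mathrm{Def}(X)=\Sym^2 H^{0,1}(X)^{\vee}$ (symmetric Kodaira–Spencer classes cut out by the polarization), I would show that the composition with $dh$ surjects onto the quotient of $H^1(X,\cO_X)$ by the line $\im(h^{*})$: the symmetric tensors pairing the $E$-factor against the $Y$-factor map precisely onto this quotient, while the $E$-with-$E$ part recovers the image of $\Def(E)$. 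Dimension bookkeeping then gives exactness in the middle: $\dim\Def(h)=\dim\Def(X)+\dim\Def(E)-\dim H^1(X,h^{*}T_E)$, and a direct computation of $\Def(h)$ (deformations of the isogeny factor, of dimension $\binom{g}{2}+\dim\mathrm{Def}(Y)$ type count) confirms the sequence is exact and short. Canonicity is then automatic since every map in the construction is functorial in $h$.
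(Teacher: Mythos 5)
Your overall strategy---splitting $X$ up to isogeny as $Y\times E$, decomposing $\Def(X)=\Sym^2H^{0,1}(X)$ via K\"unneth, and reading off the maps Hodge-theoretically---has the same skeleton as the paper's argument, which also reduces everything to the product $Y\times E$ via $j=(\iota,h^\vee)$ and an explicit K\"unneth computation. The genuine difference is how exactness at the middle term is obtained: you import it from the ``usual'' exact sequence for deformations of a morphism, while the paper builds the sequence by hand and transports exactness across the isogeny $j$ using a separate statement (Lemma \ref{isogeny_deformations}) that $\Def(j)\to\Def(B)$ and $\Def(j)\to\Def(A)$ are isomorphisms for an isogeny of polarized abelian varieties, proved with the Poincar\'e bundle. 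Your route is legitimate in principle and arguably cleaner, but it needs a justification you do not supply: the standard sequence lives on $H^1(T_X)\oplus H^1(T_E)$, not on the polarized subspace $H^1(T_X)^{\alg}\oplus H^1(T_E)$, and you must check that a pair of \emph{polarized} first-order deformations with vanishing obstruction class actually lifts to a deformation of $h$ \emph{as a $d$-elliptic map of ppavs} (rigidity upgrades a pointed deformation of the morphism to a homomorphism; the polarization deforms because $t_X\in H^1(T_X)^{\alg}$). Relatedly, the normalization of the map $\Def(E)\to H^1(X,h^*T_E)$ must be pinned down rather than left as ``the obstruction'': the paper is at pains to track a factor of $d$ coming from $h\circ h^\vee=[d]$ when comparing the target $E$ of $h$ with the $E$-factor of $Y\times E$, and any argument routed through that splitting has to confront the same factor.

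Two concrete errors. First, your claim that the image of $\Def(X)=H^1(T_X)^{\alg}$ in $H^1(X,h^*T_E)$ has codimension exactly one, complementary to $\im(h^*)$, is false: writing $H^{0,1}(X)=A\oplus B$ with $A=h^*H^{0,1}(E)$, the summand $A\cdot B$ of $\Sym^2H^{0,1}(X)$ already surjects onto $B\otimes\mathrm{Lie}(E)$ and $\Sym^2A$ onto $A\otimes\mathrm{Lie}(E)$, so $\Def(X)$ alone surjects onto $H^1(X,h^*T_E)$---indeed your own description of the two pieces implies this and contradicts the codimension-one statement (compare the paper's computation of $\Def(Y\times E)\to H^1(Y\times E,T_E)$, which is already surjective without the $\Def(E)$ summand). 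This does not hurt surjectivity of the combined map, but it undermines the ``dimension bookkeeping'' you lean on for exactness in the middle. Second, that bookkeeping is circular as written: to know $\dim\Def(h)=\binom{g}{2}+1$ you need the identification $\Def(h)\cong\Def(Y)\oplus\Def(E)$, i.e., that deformations of $h$ are exactly deformations of the two isogeny factors---which is precisely the substantive content the paper extracts from Lemma \ref{isogeny_deformations} together with the vanishing of deformations of $h$ fixing $X$ and $E$. Either commit fully to the standard morphism sequence (making middle exactness automatic and the dimension count unnecessary, at the cost of the polarized-versus-unpolarized justification above), or compute $\dim\Def(h)$ independently; as written the proposal does neither.
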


This will globalize to the following statement:

\begin{prop}\label{NL_normal_bundles}
The map $(\epsilon,\mu):\wNL_{g,d}\to \cA_g\times \cA_1$ is unramified, with normal bundle 
 \begin{equation*}
     R^1(\pi_{\NL,g})_{*}h_{\NL}^{*}T_{\cX_1/\cA_1},
 \end{equation*}
where $\pi_{\NL,g}:\cX_{\NL,g}\to \wNL_{g,d}$ is the pullback of the universal family $\pi_g:\cX_g\to \cA_g$ by $\epsilon:\wNL_{g,d}\to\cA_g$.
\end{prop}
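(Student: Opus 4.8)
The plan is to prove Proposition \ref{Def(h)} first, then globalize it to Proposition \ref{NL_normal_bundles} by recognizing the deformation-theoretic statement as the fiberwise incarnation of a statement about cotangent complexes and relative Hom stacks. The key conceptual input is that a $d$-elliptic map $h:X\to E$ is a point of the relative Hom stack $\Hom(\cX_g,\cX_1)$, whose deformation theory is governed by $H^i(X,h^*T_E)$, while $\wNL_{g,d}$ sits inside it as the locus where $h$ remains a map of \emph{abelian varieties} (respecting the group structure). The first-order deformations of $X$ and $E$ as polarized abelian varieties are $\Def(X)$ and $\Def(E)$, and the obstruction to deforming $h$ along a given pair of deformations of source and target lives in $H^1(X,h^*T_E)$.

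\medskip

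For Proposition \ref{Def(h)}, the plan is as follows. A deformation of the map $h:X\to E$ consists of compatible deformations of $X$, of $E$, and of the morphism between them. Since $T_E\cong\cO_E$ is trivial (as $E$ is an elliptic curve), we have $h^*T_E\cong\cO_X$, so $H^0(X,h^*T_E)\cong\bC$ and $H^1(X,h^*T_E)\cong H^1(X,\cO_X)$, which has dimension $g$. I would set up the standard exact sequence for deformations of a morphism with varying source and target: there is a natural map
\begin{equation*}
\Def(X)\oplus\Def(E)\to H^1(X,h^*T_E)
\end{equation*}
sending a pair of deformations to the obstruction to deforming $h$ accordingly, and $\Def(h)$ is precisely the kernel, i.e. the pairs for which $h$ does deform. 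The content is therefore (i) identifying this obstruction map correctly, and (ii) proving it is surjective. Surjectivity is the crux: I would argue that $\Def(E)\to H^1(X,h^*T_E)$ already hits everything, or combine both summands, by using that the deformations of the target elliptic curve $E$ contribute via $h^*:H^1(E,T_E)\to H^1(X,h^*T_E)$ and that the polarized deformations of $X$ surject onto $H^1(X,\cO_X)=H^2(X,\cO_X)^\vee$-related directions through the Hodge-theoretic description $\Def(X)=H^1(X,T_X)^{\alg}$. The key point is that $h$ being \emph{$d$-elliptic} (so that $h\circ h^\vee=d$) forces $h^*$ on $H^1(\cO)$ to be nonzero/surjective onto the relevant piece up to the isogeny, making the combined map surjective; counting dimensions then pins down $\dim\Def(h)$.

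\medskip

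To globalize to Proposition \ref{NL_normal_bundles}, I would interpret Proposition \ref{Def(h)} as saying that the relative cotangent complex of $(\epsilon,\mu):\wNL_{g,d}\to\cA_g\times\cA_1$ is concentrated in degree $-1$ and equal to $(R^1(\pi_{\NL,g})_*h_{\NL}^*T_{\cX_1/\cA_1})^\vee[1]$. Unramifiedness is the vanishing of the degree-$0$ part of the relative tangent space, which is exactly the injectivity of $\Def(h)\to\Def(X)\oplus\Def(E)$ (equivalently, the statement that a deformation of $h$ that is trivial on both source and target is trivial), established fiberwise in Proposition \ref{Def(h)}. The normal bundle is then the dual of the degree-$(-1)$ cohomology of the relative cotangent complex, and the short exact sequence of Proposition \ref{Def(h)} identifies this cokernel, fiber by fiber, with $H^1(X,h^*T_E)$; spreading this out over $\wNL_{g,d}$ using cohomology and base change gives the sheaf $R^1(\pi_{\NL,g})_*h_{\NL}^*T_{\cX_1/\cA_1}$. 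Since $h^*T_E$ is a trivial bundle of rank one on each fiber, $R^1(\pi_{\NL,g})_*h_{\NL}^*T_{\cX_1/\cA_1}$ is locally free of rank $g$, so it is genuinely a vector bundle and the formula makes sense.

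\medskip

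\textbf{The main obstacle} I anticipate is the surjectivity claim in Proposition \ref{Def(h)}, i.e. correctly identifying the obstruction map $\Def(X)\oplus\Def(E)\to H^1(X,h^*T_E)$ and showing it hits everything. This requires care with the polarized (algebraic) deformation spaces rather than the full $H^1(T)$ spaces, and with how the degree-$d$ condition on $h$ interacts with the Hodge structures: one must check that the constraints cutting out $\Def(X)$ from $H^1(X,T_X)$ and $\Def(E)$ from $H^1(E,T_E)$ are compatible with the obstruction map in a way that preserves surjectivity. I would handle this by passing to the dual map $h^\vee:E\to X$ and using the splitting up to isogeny $X\sim E\times Y$, which makes $h^*$ on $H^1(\cO)$ transparent, and then verifying that the translation/deformation directions account for all of $H^1(X,h^*T_E)$. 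A secondary technical point is ensuring cohomology and base change applies cleanly so that the fiberwise sequence globalizes to an exact sequence of sheaves, but since the relevant $R^1$ has constant rank $g$ this should be routine.
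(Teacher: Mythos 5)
Your overall strategy coincides with the paper's: everything reduces to Proposition \ref{Def(h)}, which is attacked via the isogeny splitting $j:Y\times E\to X$ (with $Y=\ker(h)^{\circ}$) and a K\"unneth/Hodge-theoretic computation of the polarized deformation spaces, and then globalized (unramifiedness $=$ injectivity of $\Def(h)\to\Def(X)\oplus\Def(E)$, normal bundle $=$ cokernel, spread out by cohomology and base change using that $R^1\pi_*h^*T$ has constant rank $g$). The one genuine difference in framing is that you take the standard deformation-theoretic four-term sequence for a morphism with varying source and target as the starting point, so that left-exactness comes for free, whereas the paper builds the entire sequence by hand through a comparison diagram with $Y\times E$; your route is cleaner on that side, provided you check that the kernel of the obstruction map restricted to the polarized subspaces $\Def(X)\oplus\Def(E)\subset H^1(X,T_X)\oplus H^1(E,T_E)$ really is $\Def(h)$ as a map of \emph{polarized abelian varieties} (this is fine: a pointed deformation of $h$ is automatically a homomorphism, and $H^0(X,h^*T_E)$ is generated by translations, which also gives the injectivity you need for unramifiedness).

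Two concrete soft spots. First, your primary suggestion for surjectivity --- that $\Def(E)\to H^1(X,h^*T_E)$ ``already hits everything'' --- fails for $g\ge 2$ by dimension count: $\Def(E)$ is $1$-dimensional while $H^1(X,h^*T_E)\cong H^1(X,\cO_X)$ has dimension $g$. In fact the opposite is closer to the truth: in the paper's K\"unneth computation the summands $H^1(Y,\cO_Y)\otimes H^0(E,T_E)$ and $H^0(Y,\cO_Y)\otimes H^1(E,T_E)$ of $\Def(Y\times E)$ already map isomorphically onto $H^1(Y\times E,T_E)$, so $\Def(X)$ alone surjects. Second, and more seriously, ``use the splitting up to isogeny'' hides the real technical content: $\Def(X)$ is cut out by the principal polarization $\theta_X$ on $X$, while the computation happens on $Y\times E$ with the pulled-back polarization $j^*\theta_X=(\theta_Y,d\cdot\theta_E)$, which is not principal. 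One must prove that the isogeny $j$ identifies the \emph{polarized} deformation spaces $\Def(X)\cong\Def(Y\times E)$ (and, to get the full short exact sequence, that deformations of $j$ itself are controlled by either side). This is the paper's Lemma \ref{isogeny_deformations}, proved via the Poincar\'e bundle and a cup-product argument, together with the vanishing $H^1(A,T_{A/B})=0$; your proposal does not indicate how this transfer would be justified, and without it the K\"unneth computation does not apply to $\Def(X)$. (The precise scalars $d$ and $-d$ the paper inserts into the maps are needed only for the comparison diagram to commute; they do not affect the exactness statement or the conclusion of Proposition \ref{NL_normal_bundles}.)
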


We first define the maps appearing in Proposition \ref{Def(h)}. The map $\Def(h) \to \Def(X)\oplus\Def(E)$ is the canonical one remembering from the data of a deformation of $h$ the data of deformations of the source and target. The map $\Def(X)\to H^1(X,h^{*}T_E)$ is given by restriction of the map $H^1(X,T_X)\to H^1(X,h^{*}T_E)$ induced by $T_X\to h^{*}T_E$ to $\Def(X)=H^1(X,T_X)^{\alg}$. The map $\Def(E)=H^1(E,T_E)\to H^1(X,h^{*}T_E)$ is \emph{$-d$ times} the canonical one 
\begin{equation*}
    H^1(E,T_E)\to H^1(E,h_{*}h^{*}T_E)\to H^1(X,h^{*}T_E).
\end{equation*}

Let $\iota:Y\hookrightarrow X$ be the connected component of $\ker(h)$ containing $0\in X$, which inherits a (typically non-principal) polarization $\theta_Y\in H^1(Y,T_Y^{\vee})$ upon pullback. The map $h$ also induces an isogeny $j:Y\times E\to X$, where $j=(\iota,h^\vee)$; here we identify $E,X$ with their duals. The pullback of the principal polarization on $X$ is the \emph{degree $d$} polarization $d\cdot\theta_E$ on $E$ and the polarization $\theta_Y$ on $Y$.

To establish the exactness in Proposition \ref{Def(h)}, we will construct a commutative diagram
\begin{equation}\label{defdiagram}
    \xymatrix{ 0 \ar[r] & \Def(Y)\oplus\Def(E) \ar[r] & \Def(Y\times E) \oplus \Def(E) \ar[r] & H^1(Y\times E,T_E) \ar[r] & 0\\
     & \Def(h) \ar[u] \ar[r] & \Def(X)\oplus \Def(E) \ar[r] \ar[u] & H^1(X,h^{*}T_E) \ar[u] & 
     }
\end{equation}
where the top row is exact, the vertical arrows are isomorphisms, and the bottom row consists of the maps defined above. Here, $\Def(Y\times E)$ is the deformation space of $Y\times E$ as a polarized abelian variety, with polarization given by $j^{*}(\theta_X)=(\theta_Y,d\cdot\theta_E)$ as described above.

We next define the maps appearing in the top row of \eqref{defdiagram}. The map $\Def(Y)\oplus\Def(E)\to\Def(E)$ is simply the projection. The map $\Def(Y)\oplus\Def(E)\to\Def(Y\times E)$ deforms $Y\times E$ by deforming $Y$ and $E$ separately according to the given deformation of $Y$ and \emph{$d$ times} the given deformation of $E$. The map $\Def(Y\times E)\to H^1(Y\times E,T_E)$ is induced by the projection $T_{Y\times E}\to T_E$, and the map $\Def(E)=H^1(E,T_E)\to H^1(Y\times E,T_E)$ is \emph{$-d$ times} the one induced by the projection $Y\times E\to E$.

\begin{lem}
The sequence
\begin{equation*}
    0 \to \Def(Y)\oplus\Def(E) \to \Def(Y\times E) \oplus \Def(E) \to H^1(Y\times E,T_E) \to 0,
\end{equation*}
defined above, is short exact.
\end{lem}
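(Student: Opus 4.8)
The plan is to compute all three terms and all three maps explicitly via the Künneth decomposition of the cohomology of $Y\times E$, and then to read off exactness by linear algebra. First I would use $T_{Y\times E}=\pr_Y^{*}T_Y\oplus\pr_E^{*}T_E$ together with the Künneth formula to write
\[
H^1(Y\times E,T_{Y\times E})=\underbrace{H^1(Y,T_Y)}_{A}\oplus\underbrace{H^0(Y,T_Y)\otimes H^1(E,\cO_E)}_{B}\oplus\underbrace{H^1(Y,\cO_Y)\otimes H^0(E,T_E)}_{C}\oplus\underbrace{H^1(E,T_E)}_{D},
\]
so that $H^1(Y\times E,\pr_E^{*}T_E)=C\oplus D$ and the projection $T_{Y\times E}\to\pr_E^{*}T_E$ induces the coordinate projection $A\oplus B\oplus C\oplus D\to C\oplus D$.

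Next I would locate $\Def(Y\times E)$ inside this decomposition. Recall $\Def(A)=\ker\bigl(H^1(A,T_A)\to H^2(A,\cO_A)\bigr)$ (cup product with the polarization class), and that the polarization on the product is $\theta_{Y\times E}=\pr_Y^{*}\theta_Y+d\cdot\pr_E^{*}\theta_E$. Since $H^2(Y\times E,\cO)=\wedge^2H^1(Y,\cO_Y)\oplus\bigl(H^1(Y,\cO_Y)\otimes H^1(E,\cO_E)\bigr)$ (the $\wedge^2H^1(E,\cO_E)$ summand vanishes as $E$ is a curve), the cup-product map splits over $A\oplus B\oplus C\oplus D$: on $A$ it is cup with $\theta_Y$, so $A\cap\Def(Y\times E)=\Def(Y)$; on $D$ it lands in $\wedge^2H^1(E,\cO_E)=0$, so $D\subset\Def(Y\times E)$ and $D=\Def(E)$; and on $B\oplus C$ it maps to the mixed summand $H^1(Y,\cO_Y)\otimes H^1(E,\cO_E)$. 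The crucial point is that on $B$ this map is $\phi_Y\otimes\id$ and on $C$ it is $\id\otimes d\phi_E$, where $\phi_Y\colon H^0(Y,T_Y)\xrightarrow{\sim}H^1(Y,\cO_Y)$ and $\phi_E\colon H^0(E,T_E)\xrightarrow{\sim}H^1(E,\cO_E)$ are the isomorphisms induced by the (non-degenerate) polarizations. Hence both restrictions are isomorphisms onto the common target, so their difference has kernel $M\subset B\oplus C$, of dimension $\dim C=g-1$, that projects isomorphically onto each of $B$ and $C$. This yields $\Def(Y\times E)=\Def(Y)\oplus M\oplus D$, and the projection onto $H^1(Y\times E,\pr_E^{*}T_E)=C\oplus D$ restricts to an isomorphism $M\xrightarrow{\sim}C$, to the identity on $D$, and kills $\Def(Y)$.

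With these identifications the maps become completely explicit. Writing elements of $\Def(Y\times E)$ as triples $(a,m,c)\in\Def(Y)\oplus M\oplus D$, the first map of the sequence sends $(\delta_Y,\delta_E)\mapsto\bigl((\delta_Y,0,d\delta_E),\delta_E\bigr)$, and the second sends $\bigl((a,m,c),\delta\bigr)\mapsto(\pr_C(m),\,c-d\delta)\in C\oplus D$, where $\pr_C(m)$ is the image of $m$ under $M\xrightarrow{\sim}C$ and the $-d\delta$ term records the $-d$-multiple of the pullback $\pr_E^{*}$. Exactness is then immediate: injectivity follows by reading off $\delta_E$ and then $\delta_Y$; the composite vanishes because the two factors of $d$ cancel in the second coordinate ($d\delta_E-d\delta_E=0$), which is exactly why the coefficients $+d$ and $-d$ are inserted in the definitions; if $\bigl((a,m,c),\delta\bigr)$ lies in the kernel then $\pr_C(m)=0$ forces $m=0$ by the isomorphism $M\cong C$ and $c=d\delta$, so the element is the image of $(a,\delta)$; and surjectivity holds since $M\cong C$ hits all of $C$ while the $D$-coordinate is visibly surjective.

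The main obstacle is the middle step: correctly identifying $\Def(Y\times E)\subset H^1(Y\times E,T_{Y\times E})$ and, in particular, proving that the mixed polarized deformations $M$ project isomorphically onto the off-diagonal summand $C=H^1(Y,\cO_Y)\otimes H^0(E,T_E)$. This rests on the non-degeneracy of the two polarizations, so that $\phi_Y$ and $d\phi_E$ are isomorphisms; it is what makes the second map surjective with the expected kernel. Once this is established, the factors of $d$ are pure bookkeeping—chosen so that the sequence is a complex—and exactness follows formally from the dimension-preserving isomorphism $M\xrightarrow{\sim}C$.
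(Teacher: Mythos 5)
Your proposal is correct and follows essentially the same route as the paper: Künneth-decompose $H^1(Y\times E,T_{Y\times E})$, analyze the cup product with the polarization $(\theta_Y,d\cdot\theta_E)$ summand by summand to identify $\Def(Y\times E)$, and then verify exactness by explicit linear algebra, with the $\pm d$ coefficients cancelling in the composite. The only cosmetic difference is that you keep the mixed polarized deformations as a graph $M\subset B\oplus C$, whereas the paper identifies this kernel outright with the summand $H^1(Y,\cO_Y)\otimes H^0(E,T_E)$.
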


\begin{proof}
    We first decompose $H^1(Y\times E,T_{Y\times E})=H^1(Y\times E,T_Y\oplus T_E)$ into its K\"{u}nneth components:
    \begin{align*}
        H^1(Y\times E,T_{Y\times E})&=(H^1(Y,T_Y)\otimes H^0(E,\cO_E)\oplus H^0(Y,T_Y)\otimes H^1(E,\cO_E))\\
        &\oplus(H^1(Y,\cO_Y)\otimes H^0(E,T_E)\oplus H^0(Y,\cO_Y)\otimes H^1(E,T_E)).
    \end{align*}

Consider now the map $H^1(Y\times E,T_{Y\times E})\to H^2(Y\times E,\cO_{Y\times E})$ given by cupping with $(\theta_Y,d\cdot\theta_E)\in H^1(Y,T_Y^\vee)\oplus H^1(E,T_E^\vee)=H^1(Y\times E,T_{Y\times E}^\vee)$ on the right. This map decomposes into the three maps
    \begin{align*}
        H^1(Y,T_Y)\otimes H^0(E,\cO_E)&\to H^2(Y,\cO_Y)\otimes  H^0(E,\cO_E)\\
        (H^0(Y,T_Y)\otimes H^1(E,\cO_E))\oplus (H^1(Y,\cO_Y)\otimes H^0(E,T_E)) &\to H^1(Y,\cO_Y)\otimes H^1(E,\cO_E)\\
        H^0(Y,\cO_Y)\otimes H^1(E,T_E) &\to H^0(Y,\cO_Y)\otimes  H^2(E,\cO_E)=0.
    \end{align*}
The first map, given by cupping with $\theta_Y$ in the first factor, has kernel $H^1(Y,T_Y)^{\alg}\otimes H^0(E,\cO_E)$, and the third has kernel $H^0(Y,\cO_Y)\otimes H^1(E,T_E)$. The second map is an isomorphism on each summand; it acts by cupping with $\theta_Y$ on the first summand (and correcting by a sign) and cupping with $d\cdot\theta_E$ in the second. Its kernel may therefore be identified with either summand; we identify it with $H^1(Y,\cO_Y)\otimes H^0(E,T_E)$.

Therefore, we conclude that
\begin{align*}
    \Def(Y\times E)&\cong (H^1(Y,T_Y)^{\alg}\otimes H^0(E,\cO_E))\\
    &\oplus (H^1(Y,\cO_Y)\otimes H^0(E,T_E))\oplus (H^0(Y,\cO_Y)\otimes H^1(E,T_E)).
\end{align*}

Let $(t_Y,t_E)\in \Def(Y)\oplus\Def(E)$ be a pair of deformations. Then, the map $\Def(Y)\oplus\Def(E)\to \Def(Y\times E)\oplus\Def(E)$ sends $(t_Y,t_E)$ to
\begin{equation*}
    ((t_y\otimes 1,0,d\otimes t_E),t_E)\in \Def(Y\times E)\oplus\Def(E)
\end{equation*}
using the decomposition of $\Def(Y\times E)$ above; this map is clearly injective. On the other hand, the map $\Def(Y\times E)\oplus\Def(E)\to H^1(Y\times E,T_E)$ sends the first summand of $\Def(Y\times E)$ to zero, and identifies the two summands 
\begin{equation*}
(H^1(Y,\cO_Y)\otimes H^0(E,T_E))\oplus (H^0(Y,\cO_Y)\otimes H^1(E,T_E))
\end{equation*}
with the target via the K\"{u}nneth decomposition. It furthermore sends $t_E\in \Def(E)$ to $-d\otimes t_E\in H^0(Y,\cO_Y)\otimes H^1(E,T_E)$. This identifies $\Def(Y\times E)\oplus\Def(E)\to H^1(Y\times E,T_E)$ with the cokernel of the first map.
\end{proof}

We now define the remaining maps in \eqref{defdiagram}, the vertical arrows. The map $\Def(h)\to \Def(Y)\oplus\Def(E)$ is the canonical one remembering the induced deformations of $Y,E$ from a deformation of $h$. The map $\Def(X)\oplus \Def(E)\to \Def(Y\times E) \oplus \Def(E)$ is given by the identity in the second factor and by the map 
\begin{equation*}
    H^1(X,T_X)\to H^1(X,j_{*}j^{*}T_X)\to H^1(X,j_{*}T_{Y\times E}) \to H^1(Y\times E,T_{Y\times E})
\end{equation*}
in the first. The map $H^1(X,j_{*}j^{*}T_X)\to H^1(X,j_{*}T_{Y\times E})$ is induced by (the inverse of) the canonical isomorphism $T_{Y\times E}\cong j^{*}T_X$. The naturality of the cup product shows that the above map restricts to a map $H^1(X,T_X)^{\alg}\to H^1(Y\times E,T_{Y\times E})^{\alg}$. The final vertical map is
\begin{equation*}
    H^1(X,h^{*}T_E)\to H^1(X,j_{*}j^{*}h^{*}T_E)\to H^1(X,j_{*}T_E)\to H^1(Y\times E,T_E).
\end{equation*}

\begin{lem}
    The diagram \eqref{defdiagram} is commutative. 
\end{lem}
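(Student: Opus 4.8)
The plan is to verify the commutativity of the two constituent squares of \eqref{defdiagram} separately, noting first that in the second ($\Def(E)$) summand of the middle column every map is the identity, so all the content lies in the first summands, which land in $\Def(Y\times E)$ and in $H^1(Y\times E,T_E)$ respectively. Two facts drive everything. The first is the identity $h\circ j=[d]\circ\pr_E\colon Y\times E\to E$, equivalent to $h\circ h^\vee=[d]_E$, which comes directly from $j=(\iota,h^\vee)$ and $Y\subset\ker h$. The second is its infinitesimal shadow: the differential $d[d]\colon T_E\to[d]^*T_E$ is multiplication by $d$ on the trivial bundle $T_E$. Since $j$ is an isogeny, $dj\colon T_{Y\times E}\xrightarrow{\sim}j^*T_X$ is an isomorphism, and I will use it throughout to identify $j^*T_X\cong T_{Y\times E}$ and to split off $T_E$.

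For the left square I would realize a first-order deformation of $h$ as a deformation of the whole isogeny $j$: deforming $h$ deforms $X$ (by $t_X$), its target $E$ (by $t_E$), the kernel component $Y=(\ker h)^0$ (by $t_Y$), and, dually, the embedded elliptic factor $h^\vee\colon E\hookrightarrow X$; these assemble to a deformation $\wt{j}$ of $j$ over the dual numbers. Because $j$ is an isogeny, $j^*t_X$ is identified with the deformation of $Y\times E$ underlying $\wt{j}$, whose Künneth components I read off against the decomposition of $\Def(Y\times E)$ established above. The $H^1(Y,T_Y)^{\alg}\otimes H^0(E,\cO_E)$-component is $t_Y$ and the middle summand $H^1(Y,\cO_Y)\otimes H^0(E,T_E)$ vanishes, so the only issue is the coefficient of $t_E$ in the $H^0(Y,\cO_Y)\otimes H^1(E,T_E)$-summand; the content is to check that this coefficient is exactly the scalar $d$ prescribed by the top horizontal map $(t_Y,t_E)\mapsto(t_Y\otimes 1,0,d\otimes t_E)$. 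I expect this to be forced by the relation $h\circ h^\vee=[d]_E$, which makes the embedded factor $h^\vee(E)\subset X$ deform at $d$ times the rate of the target $E$, and which is the same $d$ recorded by the pulled-back polarization $j^*\theta_X=(\theta_Y,d\cdot\theta_E)$.

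For the right square I would check the two summands $(t_X,0)$ and $(0,t_E)$ of $\Def(X)\oplus\Def(E)$ separately, by linearity of all the maps. On $(t_X,0)$ the claim is that $\psi(dh_*t_X)=\pr_*(j^*t_X)$ in $H^1(Y\times E,T_E)$, where $\psi$ denotes the right vertical map; this follows from the factorization $d(h\circ j)=(j^*dh)\circ dj$ together with $h\circ j=[d]\circ\pr_E$ and $d[d]=d\cdot\id$, so that the two bundle maps $T_{Y\times E}\to T_E$ being compared agree after the identifications, and one applies $H^1(Y\times E,-)$. On $(0,t_E)$ the two independent appearances of the factor $-d$ (in the bottom map $\Def(E)\to H^1(X,h^*T_E)$ and in the top map $\Def(E)\to H^1(Y\times E,T_E)$) must be shown to cancel against the normalization built into $\psi$; concretely this reduces to comparing the pullback of $t_E\in H^1(E,T_E)$ along the two routes $E\xleftarrow{\,h\,}X\xleftarrow{\,j\,}Y\times E$ and $E\xleftarrow{\pr_E}Y\times E$, which is again governed by $h\circ j=[d]\circ\pr_E$ and by the action of $[d]^*$ on $H^1(E,T_E)$.

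The main obstacle is bookkeeping: making the factors of $d$ and the signs match simultaneously in both squares. This requires fixing, once and for all, compatible conventions for (a) the self-duality identifications $E\cong E^\vee$ and $X\cong X^\vee$ used to define $h^\vee\colon E\to X$, (b) the induced identification $[d]^*T_E\cong T_E$, which differs by the scalar $d$ from the tautological trivialization, and (c) the identification $j^*\theta_X=(\theta_Y,d\cdot\theta_E)$ underlying the Künneth computation of $\Def(Y\times E)$. Once these are pinned down, each square reduces to a naturality statement for pullback along $j$ and for the cup-product and differential maps, and the verification becomes routine; the only real risk is a spurious sign or factor of $d$ produced by an inconsistent choice among (a)--(c).
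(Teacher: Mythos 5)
Your overall strategy coincides with the paper's: treat the two squares separately, observe that the $\Def(E)$-summand of the middle column is carried by the identity, reduce the left square to identifying the K\"unneth components of the induced deformation of $Y\times E$, and drive everything with $h\circ h^\vee=[d]_E$. The difficulty is that, as you yourself say, the \emph{entire} content of the lemma is the normalization --- every map in sight is natural except for the two factors of $-d$ and the factor $d\otimes t_E$, and commutativity away from those factors is pure naturality of pullback. Your write-up defers exactly these points: ``I expect this to be forced by the relation $h\circ h^\vee=[d]_E$'' and ``must be shown to cancel against the normalization built into $\psi$'' are statements of what needs to be proved, not proofs. A reader cannot extract from your text whether the coefficient in the $H^0(Y,\cO_Y)\otimes H^1(E,T_E)$-slot is $1$, $d$, or $d^2$; you flag this as ``the only real risk'' without resolving it, so the proof is not complete where it matters.

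To close the gap on the left square, the paper's route is: the deformation of the $E$-factor of $Y\times E$ is by construction the one induced on the \emph{source} of the dual isogeny $h^\vee:E^\vee\to X^\vee$ from the deformation of $X^\vee\cong X$; since the composite $E^\vee\to X^\vee\cong X\to E$ is $[d]$, this deformation is $d\cdot t_E$ (the factor $d$ coming from the action of $[d]^*$ on $H^1(E,\cO_E)$), matching the $d\otimes t_E$ in the definition of the top horizontal map. This is a genuine computation with a specific choice of identification $T_{Y\times E}\cong j^*T_X$ (the differential of $j$), and it is the step you must actually carry out. On the right square your framing is slightly off: the vertical map $H^1(X,h^*T_E)\to H^1(Y\times E,T_E)$ carries no normalization, so there is nothing to ``cancel.'' The point is simply that the factor $-d$ has been built symmetrically into \emph{both} horizontal maps out of $\Def(E)$ (top and bottom), so once the sheaf-level identifications are fixed compatibly the square commutes by naturality; introducing a comparison of $(h\circ j)^*$ with $\pr_E^*$ via $[d]^*$ on $H^1(E,T_E)$, as you propose, re-opens the convention issue (your item (b)) rather than settling it. In short: right skeleton, but the decisive bookkeeping --- which is the lemma --- is left as a risk rather than discharged.
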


\begin{proof}
Consider first the square
\begin{equation*}
\xymatrix{ 
    \Def(Y)\oplus\Def(E) \ar[r] & \Def(Y\times E) \oplus \Def(E)\\
     \Def(h) \ar[u] \ar[r] & \Def(X)\oplus \Def(E) \ar[u]
}.
\end{equation*}
A deformation of $h:X\to E$ maps to the induced deformation $t_E$ of $E$ under both maps to $\Def(E)$. Under the map $\Def(h)\to \Def(X)\to \Def(Y\times E)$, we obtain a deformation of $Y\times E$ that deforms $Y$ and $E$ separately. The deformation $t_Y$ of $Y=\ker(h)^\circ$ is that induced by $h$, but the deformation of $E$ is not $t_E$, but rather that obtained from the \emph{dual} $E^\vee\to X^\vee$. Because the composition $E^\vee\to X^\vee\cong X\to E$ is multiplication by $d$, we therefore obtain $d\cdot t_E\in \Def(E)$. By definition, this agrees with the map $\Def(h)\to\Def(Y)\oplus\Def(E)\to\Def(Y\times E)$.

The square
\begin{equation*}
    \xymatrix{\Def(Y\times E) \oplus \Def(E) \ar[r] & H^1(Y\times E,T_E) \\
     \Def(X)\oplus \Def(E) \ar[r] \ar[u] & H^1(X,h^{*}T_E) \ar[u] }
\end{equation*}
is commutative simply because all maps are induced from the natural ones on the level of sheaves. In particular, note by definition that we have multiplied the natural maps $\Def(E)=H^1(E,T_E)\to H^1(Y\times E,T_E)$ and $\Def(E)=H^1(E,T_E)\to H^1(X,h^{*}T_E)$ by $-d$. 
\end{proof}

\begin{lem}\label{isogeny_deformations}
    Let $B$ be a polarized abelian variety and let $j: A \to B$ be an isogeny. Regard $A$ as a polarized abelian variety via the pullback of the polarization on $B$. Let $\Def(j)$ be the space of deformations of $j$, as a map of polarized abelian varieties. Then, we have a commutative diagram

\begin{equation*}
\xymatrix{
 & \Def(j) \ar[rd] \ar[ld] & \\
\Def(B) \ar[rr] & & \Def(A)
}
\end{equation*}
where the bottom arrow is induced by the pullback $H^1(A,T_A)\to H^1(B,j^{*}T_A)\cong H^1(B,T_B)$ and the diagonal arrows are the forgetful maps. Furthermore, all three arrows are isomorphisms.
\end{lem}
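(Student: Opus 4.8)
The plan is to reduce the entire statement to linear algebra, exploiting two features of the abelian-variety setting: the tangent bundle of an abelian variety is translation-invariant, and homomorphisms of abelian varieties are infinitesimally rigid. First I would record the cohomological shape of the deformation spaces. For any abelian variety the bundle $T$ is trivial, so $H^1(A,T_A)\cong H^1(A,\cO_A)\otimes T_0A$ and likewise for $B$, and $\Def(A)=H^1(A,T_A)^{\alg}$ is the kernel of cupping with $\theta_A$. Since $j$ is an isogeny, $dj\colon T_0A\to T_0B$ is an isomorphism and $j^{*}\colon H^1(B,\cO_B)\to H^1(A,\cO_A)$ is an isomorphism (as $j$ is finite \'etale, or by Hodge theory), so the pullback $j^{*}\colon H^1(B,T_B)\to H^1(A,T_A)$, defined via the canonical identification $T_A\cong j^{*}T_B$ induced by $dj$, is an isomorphism. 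Because the polarization on $A$ is by definition $j^{*}$ of that on $B$, we have $\theta_A=j^{*}\theta_B$; naturality of cup product then shows $j^{*}$ carries $\ker(\cup\,\theta_B)=\Def(B)$ isomorphically onto $\ker(\cup\,\theta_A)=\Def(A)$. This establishes that the bottom arrow is an isomorphism.

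Next I would identify $\Def(j)$ itself using the deformation theory of morphisms with varying source and target (cf.\ \cite{sernesi}). The key input is that, given $(\alpha,\beta)\in\Def(A)\oplus\Def(B)$, a compatible first-order deformation of $j$ exists if and only if $dj_{*}(\alpha)=j^{*}(\beta)$ in $H^1(A,j^{*}T_B)$, where $dj_{*}$ is induced by $dj\colon T_A\to j^{*}T_B$; moreover, since homomorphisms of abelian varieties are rigid (there are no nonzero infinitesimal deformations of $j$ fixing both $A$ and $B$), such a lift is unique when it exists. This identifies $\Def(j)$ with the subspace $\{(\alpha,\beta):dj_{*}(\alpha)=j^{*}(\beta)\}\subseteq\Def(A)\oplus\Def(B)$, and the two diagonal arrows of the triangle become the projections to $\alpha$ and to $\beta$.

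Finally I would solve the compatibility condition. Under the Künneth identifications, $dj_{*}$ is $\mathrm{id}\otimes dj$ on $H^1(A,\cO_A)\otimes T_0A\to H^1(A,\cO_A)\otimes T_0B=H^1(A,j^{*}T_B)$, hence an isomorphism; therefore for each $\beta$ there is a unique compatible $\alpha=(dj_{*})^{-1}j^{*}(\beta)$, which unwinds to exactly $\alpha=j^{*}\beta$, the bottom arrow. Consequently the projection $\Def(j)\to\Def(B)$ is an isomorphism, and since $\alpha=j^{*}\beta$ with $j^{*}$ an isomorphism, so is $\Def(j)\to\Def(A)$. Commutativity is then immediate: along $\Def(j)\to\Def(B)\to\Def(A)$ a deformation maps to $\beta$ and then to $j^{*}\beta=\alpha$, agreeing with its image under $\Def(j)\to\Def(A)$.

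The main obstacle is the deformation-theoretic input of the second paragraph: one must pin down precisely the criterion $dj_{*}(\alpha)=j^{*}(\beta)$ for lifting a pair of deformations to the homomorphism $j$, together with the vanishing of the fixed-endpoints deformations (rigidity of $\Hom(A,B)$), and verify that all of this is compatible with the polarization constraints, i.e.\ that one may work throughout inside the algebraic subspaces $H^1(-,T)^{\alg}$. Once that compatibility condition is secured in the polarized setting, the remainder is the elementary linear algebra above.
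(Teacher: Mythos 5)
Your argument is correct, but it reaches the crucial isomorphism $\Def(j)\to\Def(B)$ by a genuinely different route from the paper. The paper does not invoke the general fiber-product description of deformations of a morphism with varying source and target. Instead it encodes $j$ as a line bundle $\cL_j$ on $A\times B^\vee$, namely the pullback of the Poincar\'e bundle $\cL_\cP$ under $(j,\id)$, and constructs an explicit inverse $\Def(B)\to\Def(j)$: given $t_B$ it forms the dual deformation $t_{B^\vee}$ via relative $\Pic^0$, uses the extension of $\cL_\cP$ over $\cB\times_S\cB^\vee$ to get the vanishing of the cup product $(t_B,t_{B^\vee})\otimes c_1(\cL_\cP)$, and transports that vanishing to $A\times B^\vee$ by a K\"unneth computation, so that $\cL_j$ extends and yields a deformation of $j$; injectivity of $\Def(j)\to\Def(B)$ is then the vanishing $H^1(A,T_{A/B})=0$ from \cite[Lemma 3.4.7]{sernesi}. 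Your route takes as input the Horikawa-type identification of $\Def(j)$ with $\{(\alpha,\beta):dj_{*}\alpha=j^{*}\beta\}$ inside $\Def(A)\oplus\Def(B)$, together with rigidity of homomorphisms of abelian varieties, after which the lemma collapses to the observation that $dj_{*}$ is an isomorphism. That is shorter and cleaner, provided you actually secure the two points you flag: that the $H^0(A,j^{*}T_B)$ ambiguity in lifting (which is nonzero, being the translations of $B$) is eliminated by insisting the deformed map preserve the origin sections, and that the polarization constraints are preserved, which follows from $\theta_A=j^{*}\theta_B$ and naturality of cup product since $j^{*}$ is injective on $H^2(\cO)$. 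Both points do go through, so your proof is sound. What the paper's construction buys is an explicit inverse in the intrinsic language of dual abelian varieties and line bundles, in the same spirit as the K\"unneth manipulations used throughout the proof of Proposition \ref{Def(h)}; what yours buys is brevity at the cost of importing the varying-target deformation theory of morphisms as a black box.
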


\begin{proof}
We first show that the bottom arrow is an isomorphism. The natural map $H^1(A,T_A)\to H^1(B,T_B)$ preserves algebraic deformations by naturality of the cup product. Moreover, a choice of trivialization $\cO_B^g\cong T_B$ yields a trivialization $\cO_A^g\cong T_A$ upon pullback. Via these compatible trivializations, the map $H^1(A,T_A)\to H^1(B,T_B)$ is induced by the isomorphism $H^1(B,\cO_B)\to H^1(A,\cO)$ (which, in turn, is induced by the isomorphism $H^1(B,\bC)\to H^1(A,\bC)$ on singular cohomology and the Hodge decomposition).

We next describe the inverse map $\Def(B)\to\Def(j)$. The data of $j$ is equivalent to that of  a line bundle $\cL_j$ on $A\times B^\vee$, trivialized along $0\times B^\vee$ and $A\times 0$. This line bundle is the pullback of the Poincar\'{e} bundle $\cL_\cP$ on $B\times B^\vee$ via the map 
\begin{equation*}
(j,\id): A\times B^\vee \to B\times B^\vee.
\end{equation*}
Given any class $t_B\in H^1(B,T_B)$ corresponding to a deformation $\mathcal B \to S$ of $B$, taking relative $\Pic^0$ gives a deformation $\mathcal B^\vee \to S$ of $B^\vee$, which in turn corresponds to a deformation $t_{B^\vee}\in H^1(B^\vee,T_{B^\vee})$.

Because the Poincar\'{e} bundle $\cL_\cP$ extends to a relative Poincar\'{e} bundle over the product deformation $\cB\times_S \cB^\vee$, we obtain that the cup product map
\begin{equation*}
H^1(B\times B^\vee,T_{B\times B^\vee})\otimes H^1(B\times B^\vee,\Omega_{B\times B^\vee}) \to H^2(B\times B^\vee,\cO_{B\times B^\vee})
\end{equation*}
sends $(t_B,t_{B^\vee})\otimes c_1(\cL_\cP)$ to zero. Observe that $(t_B,t_{B^\vee})$ is supported on the K\"{u}nneth components
\begin{equation*}
H^1(B,T_B)\otimes H^0(B^\vee, \cO_{B^\vee}) \oplus H^0(B, \cO_B)\otimes H^1(B^\vee, T_{B^\vee})
\end{equation*}
of $H^1(B\times B^\vee,T_{B\times B^\vee})$, and that $c_1(\cL_\cP)$ is supported on the K\"{u}nneth components
\begin{equation*}
    H^0(B,\Omega_B)\otimes H^1(B^\vee, \cO_{B^\vee}) \oplus H^1(B, \cO_B)\otimes H^0(B^\vee, \Omega_{B^\vee})
\end{equation*}
of $H^1(B\times B^\vee,\Omega_{B\times B^\vee})$, because $\cL_\cP$ is trivialized along both zero sections. Letting $(c_B^0,c_B^1)$ be the projections of $c_1(\cL_\cP)$ to the two summands above, we have $t_B\cdot c_B^0 + t_{B^\vee}\cdot c_{B}^1=0$. 

Now, let $t_A$ be the image of $t_B$ under the map $\Def(B)\to \Def(A)$; we consider the cup product $(t_A,t_{B^\vee})\otimes c_1(\cL_j)$. We have that $(t_A,t_{B^\vee})$ is supported on the K\"{u}nneth components
\begin{equation*}
H^1(A,T_A)\otimes H^0(B^\vee, \cO_{B^\vee}) \oplus H^0(A, \cO_A)\otimes H^1(B^\vee, T_{B^\vee})
\end{equation*}
of $H^1(A\times B^\vee,T_{A\times B^\vee})$, given by pullback from $H^1(B\times B^\vee,T_{B\times B^\vee})$, and that the projections $(c_A^0,c_A^1)$ of $c_1(\cL_j)$ to the K\"{u}nneth components
\begin{equation*}
    H^0(A,\Omega_A)\otimes H^1(B^\vee, \cO_{B^\vee}) \oplus H^1(A, \cO_A)\otimes H^0(B^\vee, \Omega_{B^\vee})
\end{equation*}
of $H^1(A\times B^\vee,\Omega_{A\times B^\vee})$ are also given by pullback by $j$. By naturality of cup product, the product $t_A\cdot c^0_A + t_{B^\vee}\cdot c^1_{A}=0$ is equal to zero. Therefore, the line bundle $\cL_j$ extends to the deformation $\cA\times\cB^\vee$ corresponding to $(t_A,t_{B^\vee})$, giving a deformation of $j$ and therefore the desired natural map $\Def(j)\to\Def(B)$.

The composition $\Def(B)\to\Def(j)\to\Def(B)$ is clearly equal to the identity by construction. Moreover, the kernel of the map $\Def(j)\to\Def(B)$ is the space of deformations of $j$ with the target $B$ fixed, which is equal to $H^1(A,T_{A/B})=0$ by \cite[Lemma 3.4.7]{sernesi}. Therefore, the map $\Def(j)\to \Def(B)$ is an isomorphism, with inverse given by the map $\Def(B)\to\Def(j)$ constructed above. Moreover, the composition $\Def(B)\to\Def(j)\to\Def(A)$ is equal to the pullback map on deformations by construction, so we obtain the needed commutativity. 
\end{proof}

\begin{proof}[Proof of Proposition \ref{Def(h)}]
It remains to show that the vertical maps of \eqref{defdiagram} are isomorphisms. The middle vertical map is an isomorphism by Lemma \ref{isogeny_deformations}. The map $H^1(X,h^{*}T_E)\to H^1(Y\times E,h^{*}T_E)$ is similarly seen to be an isomorphism, after choosing a trivialization $\cO_E\cong T_E$.

Finally, an inverse to the map $\Def(h)\to\Def(Y)\oplus\Def(E)$ is given as follows. Deformations of $Y,E$ give rise to a deformation of $Y\times E$ via the map $\Def(Y)\times\Def(E)\to\Def(Y\times E)$ from before. Then, applying Lemma \ref{isogeny_deformations} to the induced isogeny $j:Y\times E\to X$ yields a deformation of $j$, which in turn yields a deformation of $h$ upon dualizing and projecting to $E$. The composition $\Def(Y)\oplus\Def(E)\to\Def(h)\to\Def(Y)\oplus\Def(E)$ is the identity by construction. Furthermore, an element of the kernel of $\Def(h)\to\Def(Y)\oplus\Def(E)$ must restrict to the trivial deformation on $X$ in addition to that of $Y$ and $E$, by Lemma \ref{isogeny_deformations}. However, the space of deformations of $h$ fixing $X,E$ is given by $H^0(X,f^{*}T_E)$, which is generated by the deformation obtained by post-composition on $E$, so there are no non-trivial deformations of $h$ as a map of abelian varieties. This completes the proof.
\end{proof}

\begin{proof}[Proof of Proposition \ref{NL_normal_bundles}]
    Proposition \ref{Def(h)} globalizes to a short exact sequence
    \begin{equation*}
        0\to \cT_{\wNL_{g,d}} \to (\epsilon,\mu)^{*}\cT_{\cA_g\times\cA_1}\to R^1(\pi_{\NL,g})_{*}h_{\NL}^{*}T_{\cX_1/\cA_1}\to 0.
    \end{equation*}
\end{proof}

\subsection{Comparison to stable maps}

\begin{defn}
Let $\cM^{\ct}_{g,1}$ be the stack of pointed curves of compact type. Let $\Tor_1:\cM_{g,1}^{\ct}\to\cA_g$ be the Torelli map. 
\end{defn}

We work throughout this section with the open substack $\cM^{\ct,q}_{g,1}(\cE,d)\subset \barM^{q}_{g,1}(\cE,d)$ of stable maps (up to translation) whose source curve is of compact type. The virtual class $[\cM^{\ct,q}_{g,1}(\cE,d)]^{\trunc}\in A_{2g-1}(\cM^{\ct,q}_{g,1}(\cE,d))$ is defined by restriction.

\begin{lem}\label{degrees_agree}
Fix $d>0$ and $[f:C\to E]\in \cM^{\ct,q}_{g,1}(\cE,d)$. Then, the induced map $f^{*}:E\to J(C)$ has degree $d$.
\end{lem}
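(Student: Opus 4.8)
The plan is to reduce the statement to the well-understood case of a $d$-elliptic cover of smooth curves, and then to extend across the compact-type boundary by a specialization/degeneration argument. First I would recall that for $[f:C\to E]\in \cM^{\ct,q}_{g,1}(\cE,d)$, the map $f$ induces, by functoriality of the Jacobian (or Albanese), a map $f_*: J(C)\to E$ together with its dual $f^*: E\to J(C)$, where we have used the principal polarizations to identify $J(C),E$ with their duals. The claim is precisely that the composite $f_*\circ f^*: E\to E$ is multiplication by $d$, i.e. that $f^*$ has degree $d$ in the sense of the first definition of Section 3. Since $C$ is of compact type, $J(C)$ is a \emph{principally polarized} abelian variety of dimension $g$, so this statement makes sense.

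The key step is to verify the degree computation when $C$ is smooth, where everything is classical: if $f:C\to E$ is a degree-$d$ map of smooth curves, then $f_*\circ f^* = \deg(f)\cdot\id_E = d\cdot\id_E$ on $E\cong \Pic^0(E)$, which is exactly the projection formula / the statement that pullback followed by pushforward is multiplication by the degree. Equivalently, $(f^*)$ is the dual of $f_*$ under the principal polarizations, and the pullback of the theta divisor of $J(C)$ to $E$ via $f^*$ has degree equal to $\deg(f)=d$, as recalled in the introduction. This handles the locus where $C$ is smooth, which is dense in each connected component meeting the smooth locus.

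To pass to arbitrary compact-type $C$, I would argue that the degree is locally constant in families and invoke a connectedness/specialization argument. Concretely, for a compact-type curve $C$ the Jacobian $J(C)$ is the product of the Jacobians of the components, and the polarization is the product polarization; the map $f^*$ respects this decomposition. One may either (i) degenerate within $\cM^{\ct,q}_{g,1}(\cE,d)$ from a smooth curve and use that $f_*\circ f^*$ is a morphism of abelian schemes over the base, hence its value (as an endomorphism of the fiber $E$, which is $\bZ\cdot\id_E$) is locally constant, so it remains multiplication by $d$; or (ii) compute directly on each component, noting that $f$ restricted to the component mapping nontrivially contributes the degree, while contracted components contribute nothing to $f_*\circ f^*$. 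The honest bookkeeping is that the total degree of $f$ is preserved under stabilization and the contracted components do not affect the induced map on Jacobians.

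The main obstacle I anticipate is the treatment of contracted components and the behavior at the boundary: one must check that $f^*: E\to J(C)$ and $f_*: J(C)\to E$ are genuinely defined (and dual) for singular compact-type $C$, and that components of $C$ contracted by $f$ do not alter the composite $f_*\circ f^*$. The cleanest resolution is the family-theoretic argument (i): realize the given map as a limit of maps from smooth curves, observe that $f_*\circ f^*$ defines a section of the sheaf of endomorphisms of the (polarized) abelian scheme $E$ over the base, which is the locally constant sheaf $\bZ$, and conclude the degree is the constant $d$ inherited from the smooth locus. This avoids a delicate component-by-component analysis and isolates the only real content—that the degree $d$ of the stable map is the same invariant as the degree of $f^*$ as a $d$-elliptic map of abelian varieties.
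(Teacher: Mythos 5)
Your preferred route (i) has a genuine gap: it requires that the given stable map be a limit of stable maps with \emph{smooth} source, but the smooth locus is not dense in $\cM^{\ct,q}_{g,1}(\cE,d)$. The introduction of the paper gives an explicit obstruction: for $C=C_{g-1}\cup E'$ with $C_{g-1}$ contracted to $q$ and $E'$ mapping by an isogeny, the source curve moves in a family of dimension $3g-3$, so such maps form components of excess dimension that cannot lie in the closure of the locus of maps from smooth curves (which has dimension $2g-1$ where nonempty). You hedge by saying the smooth locus is dense ``in each connected component meeting the smooth locus,'' but that leaves precisely the problematic components unaddressed, and your specialization argument says nothing about them. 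So as written, (i) does not prove the lemma.

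Your alternative (ii) is the correct argument, and it is essentially what the paper does, only the paper states it in one line: under the principal polarizations, $(f^*)^\vee$ is identified with $f_*$ (the norm map), and $f_*\circ f^*=d\cdot\id_E$ holds \emph{directly} for any degree-$d$ stable map from a compact-type curve --- $J(C)$ is the product of the Jacobians of the components, contracted components receive the zero map from $\Pic^0(E)$ and contribute nothing under the norm, and the non-contracted components contribute their degrees $d_i$ with $\sum d_i=d$. No degeneration is needed; the component-by-component bookkeeping you were trying to avoid is in fact the short, robust proof. I would drop (i) entirely and write out (ii).
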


\begin{proof}
The composition $(f^{*})^\vee\circ(f^{*}):E\to J(C)\to E$ is canonically identified with $f_{*}\circ f^{*}$, which is multiplication by $d$.
\end{proof}

In particular, we obtain a map $\wt{\Tor}_1:\cM^{\ct,q}_{g,1}(\cE,d)\to \wNL_{g,d}$.

\begin{prop}\label{pullback_to_mg1_prop}
We have a commutative diagram
\begin{equation*}
\xymatrix{
\cM^{\ct,q}_{g,1}(\cE,d) \ar[r]^(0.6){\wt{\Tor}_1} \ar[d]_{(\epsilon,\mu)} & \wNL_{g,d} \ar[d]^{(\epsilon,\mu)} \\
\cM_{g,1}^{\ct} \times \cM_{1,1} \ar[r]^(0.54){(\Tor_1,\id)} \ar[d] & \cA_g \times \cA_1 \ar[d] \\
\cM_{g,1}^{\ct}  \ar[r]^(0.54){\Tor_1} & \cA_g
}
\end{equation*}
where both squares are Cartesian.
\end{prop}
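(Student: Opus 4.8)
The plan is to reduce the statement to the Cartesian-ness of the top square, and to prove the latter by exhibiting a natural inverse to the comparison map via the relative Albanese. The bottom square is Cartesian for a formal reason: under the canonical identification $\cA_1=\cM_{1,1}$, the map $(\Tor_1,\id)$ is exactly the base change of $\Tor_1\colon\cM^{\ct}_{g,1}\to\cA_g$ along $\pr\colon \cA_g\times\cA_1\to\cA_g$, so that $\cM^{\ct}_{g,1}\times\cM_{1,1}=\cM^{\ct}_{g,1}\times_{\cA_g}(\cA_g\times\cA_1)$. Commutativity of the top square is checked by tracing an $S$-point $[f\colon(C,p)\to(E,q)]$: going right then down yields the $\wNL$-object $f^{*}\colon E\to J(C)$ and then the pair $(J(C),E)$, while going down then right yields $((C,p),(E,q))$ and then $(J(C),E)$, which agree. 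Granting that the top square is Cartesian, Theorem \ref{thm_pullback_to_mg1} follows by pasting the two Cartesian squares.

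For the top square I would argue on the level of functors of points. An $S$-point of the fibered product $\wNL_{g,d}\times_{\cA_g\times\cA_1}(\cM^{\ct}_{g,1}\times\cM_{1,1})$ is the data of a family of stable compact-type pointed curves $(\cC,\sigma)/S$, a family of elliptic curves $(\cE_S,e)/S$, and a degree-$d$ homomorphism $h\colon J(\cC/S)\to\cE_S$, where we have used the identifications $\cX\cong J(\cC/S)$ and $\cE'\cong\cE_S$ coming from the matching condition. The comparison map sends $[f]$ to $(\,(C,p),(E,q),\,f_{*}\,)$, using $\wt{\Tor}_1([f])=f^{*}$ and $f_*=(f^*)^{\vee}$. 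To invert it, given $h$ I would set $f\colonequals h\circ \mathrm{aj}_\sigma\colon \cC\to \cE_S$, where $\mathrm{aj}_\sigma\colon \cC\to J(\cC/S)$ is the relative Abel--Jacobi map attached to the section $\sigma$ and to the identification of $J(\cC/S)=\Pic^0(\cC/S)$ with the relative Albanese. Then $f\circ\sigma=h(0)=e$, and $f$ is fiberwise a stable map because its source is already a stable pointed curve: every $f$-contracted component $C_i$ satisfies $2g_i-2+n_i>0$ on its own, so no stability needs to be imposed (this is the role played in reverse by Lemma \ref{domain_automatically_stable}).

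That these constructions are mutually inverse follows from the functoriality of the Albanese. On one hand $(\mathrm{aj}_\sigma)_{*}=\id_{J(\cC/S)}$, so $f_{*}=(h\circ\mathrm{aj}_\sigma)_{*}=h$, recovering $h$; on the other, for $f\colon(C,p)\to(E,q)$ the identity $f_{*}\circ\mathrm{aj}_\sigma=\mathrm{aj}_e\circ f=f$ holds because $\mathrm{aj}_e\colon E\to J(E)$ is the canonical identification, recovering $f$. For the degree bookkeeping, I would note that $f_{*}\circ f^{*}$ equals multiplication by the stable-map degree even when $f$ contracts components, since a contracted component contributes $0$ to the sum over components; thus $h\circ h^{\vee}=f_{*}\circ f^{*}=[d]$ translates the degree-$d$ condition on $h$ into the degree-$d$ condition on $f$ and conversely, which is exactly the content of Lemma \ref{degrees_agree} together with its converse. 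One finally checks that the whole correspondence is natural in $S$, so it defines an isomorphism of stacks.

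The main obstacle is the input about the relative Albanese: one must know that for a family $(\cC,\sigma)/S$ of compact-type curves the relative $\Pic^0$ is an abelian scheme carrying the principal polarization defining $\Tor_1$, that it represents the relative Albanese (so that $S$-morphisms $\cC\to\cE_S$ through $e$ correspond bijectively to $S$-homomorphisms $J(\cC/S)\to\cE_S$), and that $\mathrm{aj}_\sigma$ together with the functoriality identities above are valid integrally over $S$. For smooth families this is classical; in the compact-type case I would either reduce to the components using the tree structure of the dual graph, or cite the corresponding statement for the (generalized) Jacobian of a nodal curve, taking care that $\sigma$ meets only the smooth locus so that $\mathrm{aj}_\sigma$ is an honest morphism. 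Everything else is a formal manipulation of these universal properties.
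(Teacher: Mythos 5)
Your argument is correct and takes essentially the same route as the paper: the bottom square is a formal base change, and the top square is shown Cartesian by constructing the inverse on $S$-points as the composition $h\circ \mathrm{aj}_\sigma$ of the universal $d$-elliptic map with the relative Abel--Jacobi map. The paper leaves the mutual-inverse check as ``easily verified,'' whereas you spell out the Albanese functoriality, the degree bookkeeping, and the automatic stability of the source; these additions are consistent with the paper's intent.
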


\begin{proof} 
The bottom square is obtained simply by taking products with $\cM_{1,1}=\cA_1$. We consider only the top square.

The commutativity is clear. We construct the inverse map
\begin{equation*}
    \gamma:\wNL_{g,d}\times_{\cA_g\times\cA_1}(\cM_{g,1}^{\ct} \times \cM_{1,1})\to \cM^{\ct,q}_{g,1}(\cE,d).
\end{equation*}
Consider a test family $S\to \wNL_{g,d}\times_{\cA_g\times\cA_1}(\cM_{g,1}^{\ct} \times \cM_{1,1})$. The map $S\to\cM^{\ct}_{g,1}$ corresponds to a family of curves $\pi:\cC\to S$ with section $p:S\to \cC$, which in turn gives rise to an Abel-Jacobi morphism $AJ_S:\cC\to J(\cC)$ over $S$. We also have a universal $d$-elliptic map map $h:J(\cC)\to\cE_S$ to the pullback of the universal family $\pi:\cE\to\cM_{1,1}=\cA_1$ over $S$. We now define the family of stable maps $\gamma(S)$ by the composition $h\circ AJ_S:\cC\to \cE_S$ over $S$, which is easily verified to be the desired inverse.
\end{proof}

In particular, we have proven Theorem \ref{thm_pullback_to_mg1}.

\begin{prop}\label{gysin_equals_trunc}
The Gysin pullback $(\Tor_1,\id)^{!}([\wNL_{g,d}])\in A_{2g-1}(\cM^{\ct,q}_{g,1}(\cE,d))$ is equal to the virtual class $[\cM^{\ct,q}_{g,1}(\cE,d)]^{\trunc}$.
\end{prop}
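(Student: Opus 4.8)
The plan is to exhibit both classes as virtual fundamental classes of perfect obstruction theories on $W:=\cM^{\ct,q}_{g,1}(\cE,d)$ relative to the smooth base $X:=\cM^{\ct}_{g,1}\times\cM_{1,1}$ (an open substack of $\barM_{g,1}\times\cM_{1,1}$), and then to identify the two obstruction theories. Throughout I restrict all universal objects to $W$, writing $\pi:\cC\to W$ and $f:\cC\to\cE$ for the universal curve and map, $\pi_{\cJ}:\cJ\to W$ for the pullback of $\pi_{\NL,g}:\cX_{\NL,g}\to\wNL_{g,d}$ along $\wt{\Tor}_1$, and $h:\cJ\to\cE$ for the universal $d$-elliptic map. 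I set $B:=R^1\pi_*f^*T_{\cE/\cM_{1,1}}$, a rank-$g$ bundle with fiber $H^1(C,f^*T_E)$.

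First I would reinterpret the left-hand side. By Proposition \ref{NL_normal_bundles} the map $(\epsilon,\mu):\wNL_{g,d}\to\cA_g\times\cA_1$ is unramified with normal bundle $N:=R^1(\pi_{\NL,g})_*h_{\NL}^*T_{\cX_1/\cA_1}$ of rank $g$; together with the properness of Lemma \ref{maps_proper} this exhibits $(\epsilon,\mu)$ as a codimension-$g$ local complete intersection morphism, carrying the tautological perfect obstruction theory $\bL_{\wNL_{g,d}/\cA_g\times\cA_1}\cong N^\vee[1]$. The refined Gysin pullback is then an instance of virtual pullback (Manolache; see also \cite{bf}), so that $(\Tor_1,\id)^!([\wNL_{g,d}])$ equals the virtual class $[W]^{\vir}_\psi$ of the base-changed obstruction theory
\begin{equation*}
\psi:\ \wt{\Tor}_1^*N^\vee[1]\longrightarrow \bL_{W/X}
\end{equation*}
coming from the Cartesian square of Proposition \ref{pullback_to_mg1_prop}; here the intrinsic normal cone $\mathfrak{C}_{W/X}$ correctly records the excess arising on the loci where $\dim W>2g-1$.

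The right-hand side is by definition the virtual class $[W]^{\vir}_{e_{\trunc}}$ of the truncated obstruction theory $e_{\trunc}:B^\vee[1]\to\bL_{W/X}$ of Proposition \ref{trunc_perfobs}. It therefore suffices to produce an isomorphism of these two perfect obstruction theories over $\bL_{W/X}$, as then $[W]^{\vir}_\psi=[W]^{\vir}_{e_{\trunc}}$ by Behrend--Fantechi \cite{bf}. The geometric input is the factorization of the universal stable map through the relative Abel--Jacobi map $AJ:\cC\to\cJ$ based at the universal marked point: by Lemma \ref{degrees_agree} one has $f=h\circ AJ$, so $f^*T_{\cE/\cM_{1,1}}\cong AJ^*(h^*T_{\cX_1/\cA_1})$, and pullback along $AJ$ yields a map $\alpha:\wt{\Tor}_1^*N=R^1\pi_{\cJ,*}h^*T_{\cX_1/\cA_1}\to R^1\pi_*f^*T_{\cE/\cM_{1,1}}=B$. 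On fibers $\alpha$ is the classical isomorphism $H^1(J(C),\cO)\xrightarrow{\sim}H^1(C,\cO)$, hence an isomorphism of rank-$g$ bundles; dualizing gives the candidate quasi-isomorphism $\alpha^\vee[1]:B^\vee[1]\xrightarrow{\sim}\wt{\Tor}_1^*N^\vee[1]$.

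The main obstacle is the compatibility $\psi\circ\alpha^\vee[1]=e_{\trunc}$: one must check that the Abel--Jacobi identification of obstruction bundles is an isomorphism of obstruction \emph{theories}, not merely of obstruction sheaves. I would establish this by tracing both maps to $\bL_{W/X}$ through their constructions and assembling a morphism of the defining distinguished triangles, in the spirit of the proof of Proposition \ref{trunc_gw_comparison}: the truncated theory is built from $f^*\bL_{\cE/\cM_{1,1}}\to\bL_{\cC/\cM_{1,1}}\to\pi^*\bL_{W/X}$, while $\psi$ is built from the analogous sequence for $h$ on $\cJ$ together with the base-change identification of $\bL_{W/X}$ with $\wt{\Tor}_1^*\bL_{\wNL_{g,d}/\cA_g\times\cA_1}$. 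The factorization $f=h\circ AJ$ and the naturality of the cotangent-complex formalism make the two sequences correspond; at the level of first-order deformations this is exactly the compatibility---already encoded in Proposition \ref{Def(h)} and its globalization Proposition \ref{NL_normal_bundles}---between deformations of the stable map $(C,p)\to(E,q)$ and deformations of the $d$-elliptic map $J(C)\to E$ under the Torelli and Abel--Jacobi maps. Granting this, the two obstruction theories agree and the virtual classes coincide.
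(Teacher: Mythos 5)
Your proposal is correct and takes essentially the same approach as the paper: both sides are realized as virtual classes of perfect obstruction theories relative to $\cM^{\ct}_{g,1}\times\cM_{1,1}$ --- the pulled-back conormal bundle $\wt{\Tor}_1^{*}C_{\wNL_{g,d}/(\cA_g\times\cA_1)}[1]$ for the Gysin pullback (via Proposition \ref{NL_normal_bundles}) and $e_{\trunc}$ for the truncated class --- and the two theories are identified by pullback along the universal Abel--Jacobi map, which is an isomorphism on the degree $-1$ obstruction sheaves. The paper is no more detailed than you are about verifying the final compatibility square over the cotangent complex, so your outline matches its argument step for step.
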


\begin{proof}
Recall that $[\cM^{\ct,q}_{g,1}(\cE,d)]^{\trunc}$ is by definition the virtual class associated to the map
\begin{equation*}
    (R^1(\pi^q_g)_{*}(f^q)^{*}T_{\cE/\cM_{1,1}})^{\vee}[1]\to \bL_{\cM^{\ct,q}_{g,1}(\cE,d)/(\cM_{g,1}^{\ct}\times\cM_{1,1})}.
\end{equation*}
On the other hand, the Gysin pullback $(\Tor_1,\id)^{!}([\wNL_{g,d}])\in A_{2g-1}(\cM^{\ct,q}_{g,1}(\cE,d))$ is the virtual class associated to the map
\begin{equation*}
    \wt{\Tor}_1^{*}C_{\wNL_{g,d}/(\cA_g\times\cA_{1})}[1] \to \bL_{\cM^{\ct,q}_{g,1}(\cE,d)/(\cM_{g,1}^{\ct}\times\cM_{1,1})},
\end{equation*}
where $C_{\wNL_{g,d}/(\cA_g\times\cA_{1})}$ denotes the conormal bundle. It therefore suffices to identify these two maps.

By Corollary \ref{NL_normal_bundles}, we have a canonical isomorphism of sheaves in degree $-1$
\begin{equation*}
e_{\trunc}:(R^1(\pi_{\NL,g})_{*}h_{\NL}^{*}T_{\cX_1/\cA_1})^{\vee}[1]\to C_{\wNL_{g,d}/(\cA_g\times\cA_{1})}[1].
\end{equation*}
Here, we identify $\cA_1$ with $\cM_{1,1}$ and $\cX_1$ with $\cE$. On the other hand, the universal Abel-Jacobi map $AJ:\cC\to J(\cC)$ induces a map 
\begin{equation*}
    \wt{\Tor}_1^{*}(R^1(\pi_{\NL,g})_{*}h_{\NL}^{*}T_{\cX_1/\cA_1})^{\vee}[1]\to (R^1(\pi^q_g)_{*}(f^q)^{*}T_{\cE/\cM_{1,1}})^{\vee}[1]
\end{equation*}
which is an isomorphism in degree $-1$. We obtain a commutative square
\begin{equation*}
    \xymatrix{
    \wt{\Tor}_1^{*}(R^1(\pi_{\NL,g})_{*}h_{\NL}^{*}T_{\cX_1/\cA_1})^{\vee}[1] \ar[r]^(0.53){\sim}
     \ar[d]_{\sim} & (R^1(\pi^q_g)_{*}(f^q)^{*}T_{\cE/\cM_{1,1}})^{\vee}[1] \ar[d] \\
    \wt{\Tor}_1^{*}C_{\wNL_{g,d}/(\cA_g\times\cA_{1})}[1] \ar[r] & \bL_{\cM^{\ct,q}_{g,1}(\cE,d)/(\cM_{g,1}^{\ct}\times\cM_{1,1})}
    }
\end{equation*}
which gives the needed identification.
\end{proof}

\begin{cor}\label{cor_tor1_pullback}
    The Gysin pullback $\Tor_1^{!}([\wNL_{g,d}])\in A_{2g-1}(\cM^{\ct,q}_{g,1}(\cE,d))$ is equal to the virtual class $[\cM^{\ct,q}_{g,1}(\cE,d)]^{\trunc}$.
\end{cor}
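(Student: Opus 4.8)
The plan is to deduce the corollary from Proposition \ref{gysin_equals_trunc} by comparing the two refined Gysin homomorphisms $\Tor_1^!$ and $(\Tor_1,\id)^!$. Since Proposition \ref{gysin_equals_trunc} already supplies $(\Tor_1,\id)^!([\wNL_{g,d}])=[\cM^{\ct,q}_{g,1}(\cE,d)]^{\trunc}$, it suffices to establish the identity
\begin{equation*}
\Tor_1^!([\wNL_{g,d}])=(\Tor_1,\id)^!([\wNL_{g,d}])
\end{equation*}
in $A_{2g-1}(\cM^{\ct,q}_{g,1}(\cE,d))$. The starting observation is that the two Cartesian squares of Proposition \ref{pullback_to_mg1_prop} stack to present the single square computing $\Tor_1^!$ as a composition, in which the lower square is obtained from $\Tor_1$ simply by taking a product with $\cM_{1,1}=\cA_1$. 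Correspondingly, the right-hand vertical maps factor as $\epsilon=\pr_1\circ(\epsilon,\mu)$ with $\pr_1\colon\cA_g\times\cA_1\to\cA_g$.

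The key steps are as follows. By Proposition \ref{NL_normal_bundles}, the map $(\epsilon,\mu)$ is a regular local immersion with normal bundle $R^1(\pi_{\NL,g})_{*}h_{\NL}^{*}T_{\cX_1/\cA_1}$, and $\pr_1$ is smooth; hence $\epsilon$ is quasi-smooth, and $\Tor_1^!$ is the refined Gysin homomorphism attached to this l.c.i.\ structure, exactly as the homomorphism $(\Tor_1,\id)^!$ of Proposition \ref{gysin_equals_trunc} is attached to the regular embedding $(\epsilon,\mu)$. By the independence of the l.c.i.\ Gysin homomorphism on its factorization into a regular embedding followed by a smooth morphism (the stacky analogue of Fulton, \emph{Intersection Theory}, Theorem 6.5; equivalently, the functoriality of virtual pullbacks for quasi-smooth morphisms), the operation $\Tor_1^!$ factors as $(\epsilon,\mu)^!\circ\pr_1^{*}$. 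Since $\pr_1$ is smooth, $\pr_1^{*}$ is ordinary flat pullback, and under the identification $\cM_{g,1}^{\ct}\times_{\cA_g}(\cA_g\times\cA_1)\cong\cM_{g,1}^{\ct}\times\cM_{1,1}$ it carries $[\cM_{g,1}^{\ct}]$ to $[\cM_{g,1}^{\ct}\times\cM_{1,1}]$. Tracing the fundamental class through this factorization then yields
\begin{equation*}
\Tor_1^!([\wNL_{g,d}])=(\epsilon,\mu)^!([\cM_{g,1}^{\ct}\times\cM_{1,1}])=(\Tor_1,\id)^!([\wNL_{g,d}]),
\end{equation*}
which is the desired identity.

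The main obstacle I anticipate is not geometric but formal: I must confirm that the class written $\Tor_1^!([\wNL_{g,d}])$ is genuinely computed through the quasi-smooth right-hand morphism $\epsilon$ — rather than through $\Tor_1$, which need not be l.c.i.\ — and that the factorization-independence of l.c.i.\ Gysin maps holds in the relevant stacky setting. In cotangent-complex terms, this reduces to checking that the obstruction theory used for $(\Tor_1,\id)^!$ in Proposition \ref{gysin_equals_trunc}, built from the conormal bundle $C_{\wNL_{g,d}/(\cA_g\times\cA_1)}$, is precisely the degree $-1$ part of the two-term complex $\bL_{\wNL_{g,d}/\cA_g}$, whose degree $0$ part is the smooth factor $\mu^{*}\bL_{\cA_1}$; equivalently, the smooth $\cA_1$-direction of $\epsilon$ must be matched with the $\cM_{1,1}$-factor of the source via the equality $\mu\circ\wt{\Tor}_1=\mu^{q}$, so that these extra smooth directions cancel. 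Once this identification of cotangent complexes and the stacky factorization property are in place, the corollary follows immediately.
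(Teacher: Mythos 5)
Your reduction of the corollary to the single identity $\Tor_1^!([\wNL_{g,d}])=(\Tor_1,\id)^!([\wNL_{g,d}])$ is exactly the content of the paper's one-line proof, but you establish that identity by a genuinely different mechanism. The paper applies \cite[Theorem 6.2(c)]{fulton} to the \emph{horizontal} arrows of the stacked diagram in Proposition \ref{pullback_to_mg1_prop}: $(\Tor_1,\id)$ is the base change of $\Tor_1$ along the flat projection $\cA_g\times\cA_1\to\cA_g$, both are l.c.i.\ morphisms of the same codimension (note that $\Tor_1$ \emph{is} l.c.i., being a morphism between smooth stacks, so the worry you raise on this point is unfounded), and the cited compatibility says precisely that the refined Gysin homomorphism of an l.c.i.\ morphism agrees with that of such a base change on any cycle lying over $\cA_g\times\cA_1$, in particular on $[\wNL_{g,d}]$. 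You instead factor the \emph{vertical} arrow $\epsilon$ as $(\cA_g\times\cA_1\to\cA_g)\circ(\epsilon,\mu)$ and invoke independence of the l.c.i.\ Gysin map on such a factorization. This works, but it forces you to trade $\Tor_1^!([\wNL_{g,d}])$ for $\epsilon^!([\cM^{\ct}_{g,1}])$ at the outset and $(\epsilon,\mu)^!([\cM^{\ct}_{g,1}\times\cM_{1,1}])$ for $(\Tor_1,\id)^!([\wNL_{g,d}])$ at the end; each trade is an instance of the commutativity of refined Gysin homomorphisms for a Cartesian square whose two arrows are both l.c.i., which you assert rather than verify --- this is the ``formal obstacle'' you correctly flag, and it is the one real debt in your write-up, though it is standard (and the paper itself incurs the second trade implicitly in Proposition \ref{gysin_equals_trunc}, where $(\Tor_1,\id)^!([\wNL_{g,d}])$ is computed via the conormal bundle of $(\epsilon,\mu)$). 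In exchange, your route makes visible that the extra smooth $\cA_1$-direction of $\epsilon$ is cancelled by the $\cM_{1,1}$-factor of the source; the paper's route is shorter and needs only base-change compatibility.
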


\begin{proof}
    Immediate from Proposition \ref{gysin_equals_trunc} and \cite[Theorem 6.2(c)]{fulton}.
\end{proof}

In particular, Theorem \ref{main_thm} follows from Proposition \ref{trunc_gw_comparison}.

\begin{rem}\label{rem_unpointed}
    One can also consider the Gysin pullback of $[\wNL_{g,d}]$ by the \emph{unpointed} Torelli map $\Tor:\cM^{\ct}_g\to\cA_g$. The diagram
    \begin{equation*}
\xymatrix{
\cM^{\ct}_{g}(\cE,d)/\cE \ar[r]^(0.6){\wt{\Tor}} \ar[d]_{\epsilon} & \wNL_{g,d} \ar[d]^{\epsilon} \\
\cM_{g}^{\ct}  \ar[r]^(0.54){\Tor} & \cA_g
}
\end{equation*}
is Cartesian, where $\cM^{\ct}_{g}(\cE,d)/\cE$ is the stack of unpointed stable maps to a (varying) smooth elliptic curve $E$ \emph{up to translation on the target}. The Gysin pullback $\Tor^{!}([\wNL_{g,d}])$ may again be identified with the natural reduced virtual class $[\cM^{\ct}_{g}(\cE,d)/\cE]^{\red}$ globalizing that considered in \cite{bopy}. Moreover, under the map
\begin{equation*}
    \phi:\cM^{\ct,q}_{g,1}(\cE,d)\to \cM^{\ct}_{g}(\cE,d)/\cE,
\end{equation*}
we have the compatibility 
\begin{equation*}
    \phi_{*}([\cM^{\ct,q}_{g,1}(\cE,d)]^{\trunc}\cap \psi)=(2g-2)\cdot [\cM^{\ct}_{g}(\cE,d)/\cE]^{\red},
\end{equation*}
essentially by definition of $[\cM^{\ct}_{g}(\cE,d)/\cE]^{\red}$. We omit the proofs, but one may follow the same arguments as above, with some additional descent considerations to deal with the quotient $\cM^{\ct}_{g}(\cE,d)/\cE$.
\end{rem}

\end{document}